\pgfplotsset{compat=1.15}
\newtheorem{theorem}{Theorem}[section]
\newtheorem{prop}[theorem]{Proposition}
\newtheorem{corollary}[theorem]{Corollary}
\newtheorem{lemma}[theorem]{Lemma}
\newtheorem{remark}[theorem]{Remark}
\theoremstyle{definition}
\newtheorem{example}[theorem]{Example}
\newcommand{\norm}[1]{\left\lVert#1\right\rVert}
\newcommand{\abs}[1]{\left\lvert#1\right\rvert}
\DeclareMathOperator{\wind}{wind}
\DeclareMathOperator{\spec}{sp}
\DeclareMathOperator{\diag}{diag}
\newcommand{\di}{\mathop{}\!{d}}
\DeclarePairedDelimiter\floor{\lfloor}{\rfloor}
\newcommand{\code}[1]{\texttt{#1}}
\title{A geometric approach to approximating the limit set of eigenvalues for banded Toeplitz matrices}
\author{Teodor Bucht\footnote{Centre for Mathematical Sciences, Lund University, Box 118, 22100 Lund, Sweden, and KTH Royal Institute of Technology, Stockholm, Sweden. T. Bucht's research was supported in part by the Swedish Reseach Council(VR) Grant No.\;2021-04703.}\;
and Jacob S. Christiansen\footnote{Centre for Mathematical Sciences, Lund University, Box 118, 22100 Lund, Sweden. J. S. Christiansen's research was supported in part by the Swedish Research Council (VR) Grant No.\;2018-03500.}}
\begin{document}
	
	\maketitle
	\pagenumbering{arabic}

%

\begin{abstract}
	This article is about finding the limit set for banded Toeplitz matrices. Our main result is a new approach to approximate the limit set $\Lambda(b)$ where $b$ is the symbol of the banded Toeplitz matrix. The new approach is geometrical and based on the formula $\Lambda(b) = \cap_{\rho \in (0, \infty)} \spec T(b_\rho)$, where $\rho$ is a scaling factor, i.e. $b_\rho(t) := b(\rho t)$, and $\spec(\cdot)$ denotes the spectrum. We show that the full intersection can be approximated by the intersection for a finite number of $\rho$'s, and that the intersection of polygon approximations for $\spec T(b_\rho)$ yields an approximating polygon for $\Lambda(b)$ that converges to $\Lambda(b)$ in the Hausdorff metric. Further, we show that one can slightly expand the polygon approximations for $\spec T(b_\rho)$ to ensure that they contain $\spec T(b_\rho)$. Then, taking the intersection yields an approximating superset of $\Lambda(b)$ which converges to $\Lambda(b)$ in the Hausdorff metric, and is guaranteed to contain $\Lambda(b)$. Combining the established algebraic (root-finding) method with our approximating superset, we are able to give an explicit bound on the Hausdorff distance to the true limit set. We implement the algorithm in Python and test it. It performs on par to and better in some cases than existing algorithms. We argue, but do not prove, that the average time complexity of the algorithm is $O(n^2 + mn\log m)$, where $n$ is the number of $\rho$'s and $m$ is the number of vertices for the polygons approximating $\spec T(b_\rho)$. Further, we argue that the distance from $\Lambda(b)$ to both the approximating polygon and the approximating superset decreases as $O(1/\sqrt{k})$ for most of $\Lambda(b)$, where $k$ is the number of elementary operations required by the algorithm. 
\end{abstract}

\section{Introduction}

In this work we study the eigenvalues of $n \times n$ truncated infinite Toeplitz matrices \begin{equation}
	T_n(a) := \begin{pmatrix}
		a_0 & a_{-1} &  a_{-2} & \ldots  & a_{-n+1}\\
		a_1 & a_{0} &  a_{-1} & \ldots & a_{-n+2} \\
		a_2 & a_{1} &  a_{0} & \ldots & \ldots \\
		\ldots  & \ldots  &  \ldots  & \ldots & a_{-1} \\
		a_{n-1} & a_{n-2} & \ldots & a_1 & a_0
	\end{pmatrix},
	\label{generic_finite_toeplitz_matrix}
\end{equation} as $n \to \infty$. Here $a : \mathbb{T} \to \mathbb{C}$  is the associated \textit{symbol} of the matrix defined by \begin{equation*}
a(t) := \sum_{n = - \infty}^{\infty} a_n t^n, \quad t \in \mathbb{T},
\end{equation*} where $\mathbb{T}$ denotes the unit circle in $\mathbb{C}$. We regard $T_n(a)$ as a truncation of the infinite Toeplitz matrix \begin{equation*}
T(a) := (a_{j-k})_{j, k = 0}^{\infty}, 
\label{general_toeplitz}
\end{equation*} which naturally can be regarded as an operator on $l^2(\mathbb{N})$. In \cite{toeplitz_original} Toeplitz showed that $(a_{j-k})_{j, k = 0}^{\infty}$ is bounded on $l^2(\mathbb{N})$ if and only if there exists a function $a \in L^{\infty}(\mathbb{T})$ that has $(a_k)_{k=-\infty}^{\infty}$ as Fourier coefficients. 

A key idea for investigating $T_n(a)$ is that $T_n(a)$ should ``mimic'' the properties of $T(a)$, at least for large $n$. In particular, it should be possible to extract information about $\spec T_n(a)$ from $\spec T(a)$. However, the connection between $\spec T_n(a)$ and $\spec T(a)$ is quite intricate for non-self-adjoint Toeplitz matrices and in general $\spec T_n(a)$ does not approach $\spec T(a)$ as $n \to \infty$. Still, there is a result that links finite dimensional spectra with infinite dimensional spectra which we can use. It is stated below in \eqref{limit_set_two_classification}.

Recall the following result due to Gohberg \cite{gohberg1952application, gohberg1958} describing $\spec T(a)$ for continuous symbols: \begin{equation*}
\spec T(a) = a(\mathbb{T}) \cup \{\lambda \in \mathbb{C} \setminus a(\mathbb{T}) : \wind{(a-\lambda)} \neq 0\},
\end{equation*} where $\wind (a - \lambda)$ denotes the winding number of $a$ around $\lambda$. 

As for $T_n(a)$, denote its eigenvalues by $\lambda_j^{(n)}, \; j=1, 2, \ldots, n$, and define the Borel measures \begin{equation*}
	\mu_n(E) := \frac{\# \{j : \lambda_j^{(n)} \in E\}}{n},
\end{equation*} where $E \subset \mathbb{C}$ is any Borel subset. These measures describe which fraction of the eigenvalues of $T_n(a)$ are located in $E$. When studying eigenvalues of $T_n(a)$, a central question is whether there exists a limiting measure $\mu$ that $\mu_n$ converge to, at least weakly, as $n \to \infty$, and if it exists, we want to compute it. This limiting measure is also referred to as the limiting eigenvalue distribution. 

Another central problem when studying the eigenvalues of $T_n(a)$ is finding the strong and weak limit sets defined by \begin{equation*}
	\begin{split}
		\Lambda_{s}  & := \bigl\{\lambda \in \mathbb{C} : \lambda_{j_n}^{(n)} \to \lambda \text{ for some sequence } j_n \to \infty\bigr\} = \limsup \spec T_n(a),  \\
		\Lambda_w  & := \bigl\{\lambda \in \mathbb{C} : \lambda_{j_k}^{(n_k)} \to \lambda \text{ for some sequences } j_k, n_k \to \infty \bigr\} = \liminf \spec T_n(a).
	\end{split}
\end{equation*} Loosely speaking, $\Lambda_{s}$ and $\Lambda_w$ tell us where the eigenvalues cluster. We will occasionally use the notation $\Lambda_{s}(a)$ and $\Lambda_w(a)$ to signify that we mean the limit sets with respect to the symbol $a$. 

When the symbol is a real valued $L^\infty$ function, the first Szeg\H{o} limit theorem (see, e.g., \cite[Theorem 5.10]{bottcher1999introduction}) describes the limit sets and the limiting eigenvalue distribution. So the case of real-valued symbols is well understood. However, a Toeplitz matrix is generally not hermitian, and so the symbol is generally not real. Finding the limiting eigenvalue distribution and limit sets for arbitrary complex-valued symbols remains an open problem. 

The pioneers of dealing with such issues for complex-valued symbols were Schmidt and Spitzer. In \cite{schmidt_spitzer}, they investigated the subclass of banded Toeplitz matrices. A Toeplitz matrix is said to be banded if its symbol $b$ is a Laurent polynomial, that is, \begin{equation*}
	b(t) = \sum_{n = -r}^s b_n t^n,
\end{equation*} for some positive integers $r$ and $s$. This means that there only are a finite number of non-zero elements in each row of $T_n(b)$. Note that if  $r$ or $s$ were non-positive integers, $T_n(b)$ would be upper or lower triangular, and all the eigenvalues would hence be trivially equal to the diagonal element, and hence $\Lambda_{s}$ and $\Lambda_w$ would both be equal to a set consisting of one point. 

A crucial insight of Schmidt and Spitzer is that if $\Lambda_{s}(b)$ and $\Lambda_w(b)$ are to ``mimic'' the behavior of $\spec T(b)$, then they are forced to also mimic the behavior of $\spec T(b_\rho)$, where $b_\rho(t) := b(\rho t)$. For $T_n(b_\rho)$ and $T_n(b)$ only differ by a similarity transform, 
\begin{equation*}
	T_n(b_\rho) = \diag(\rho, \rho^2, \ldots, \rho^n) \; T_n(b) \; \diag(\rho^{-1}, \rho^{-2}, \ldots , \rho^{-n}).
\end{equation*} So $\Lambda_{s}(b_\rho) = \Lambda_{s}(b)$ and $\Lambda_w(b_\rho) = \Lambda_w(b)$. Consequently, $\Lambda_{s}(b)$ and $\Lambda_w(b)$ should also mimic \begin{equation}
\bigcap_{\rho \in (0, \infty)} \spec T(b_\rho).
\label{infinite_intersect}
\end{equation} 

Schmidt and Spitzer also defined the polynomial \begin{equation*}
	\begin{multlined}
	Q(\lambda, z) := z^r \bigl(b(z) - \lambda\bigr) = \\ \beta_{-r} + \beta_{-r + 1} z + \ldots + (\beta_0 - \lambda) z^r + \ldots + \beta_{s-1} z^{r+s-1} + \beta_s z^{r+s}
	\end{multlined}
\end{equation*} and showed that there is a connection between the roots of $Q(\lambda, z)$ and the set in \eqref{infinite_intersect}. In particular, if we for fixed $\lambda$ denote the roots of $Q(\lambda, z)$ as $z_j(\lambda), \; j = 1, 2, \ldots, r+s$, and order them so that $\abs{z_j(\lambda)} \leq \abs{z_{j+1}(\lambda)}, \; j=1, 2, \ldots, r+s-1$, Schmidt and Spitzer showed that \begin{equation}
\Lambda(b) := \bigl\{\lambda \in \mathbb{C} : \abs{z_r(\lambda)} = \abs{z_{r+1}(\lambda)}\bigr\} =  \bigcap_{\rho \in (0, \infty)} \spec T(b_\rho).
\label{limit_set_two_classification}
\end{equation} The main result of Schmidt and Spitzer was proving that \begin{equation*}
\Lambda_{s}(b) = \Lambda_w(b) = \Lambda(b),
\end{equation*} this also explains why $\Lambda(b)$ is simply called the \textit{limit set}. Additionally, they showed that $\Lambda(b)$ is a finite union of analytic arcs. A few years later, Ullman \cite{ullman} showed that $\Lambda(b)$ is a connected set. Further significant results on the asymptotic eigenvalue behavior for banded Toeplitz matrices include Hirschman \cite{hirschman}, Widom \cite{widom1990eigenvalue, widom1994} and Duits and Kuijlaars \cite{equilibrium_problem_banded}. Hirschman derived the limiting eigenvalue distribution for banded matrices, Widom simplified the proofs of both Schmidt and Spitzer and Hirschman, and Duits and Kuijlaars characterized the limiting eigenvalue distribution in terms of an equilibrium problem. 

Also, noteworthy monographs on the subject of Toeplitz matrices include Böttcher and Silbermann \cite{bottcher1999introduction} and Böttcher and Grudsky \cite{spectral_prop_bottcher}. In \cite{bottcher1999introduction}, the general theory of Toeplitz matrices is presented, whilst \cite{spectral_prop_bottcher} focuses on spectral properties of banded Toeplitz matrices. 


\label{introduction}
As explained, there is for banded Toeplitz matrices a well developed theory that describes the structure of the limit set $\Lambda(b)$ and how dense the eigenvalues cluster on $\Lambda(b)$. But how do we find $\Lambda(b)$ for practical examples? A natural approach to finding $\Lambda(b)$ is to calculate $\spec T_n(b)$ for $n$ of increasing size and simply observe where the eigenvalues seem to cluster. However, in general this approach does not work. Consider for instance $b(t) = t^{-4} + t^1$. Then one can show that \begin{equation*}	
	\Lambda(b) = \bigl\{\lambda \in \mathbb{C} : \lambda = re^{\frac{2\pi i}{5}}, \, 0 \leq r \leq 5 \cdot 4^{-4/5}\bigr\}.
\end{equation*} Additionally, one can show that $\spec T_n(b) \subset \Lambda(b)$ for all $n$. We compare $\Lambda(b)$ with the computed eigenvalues of $T_{200}(b)$ and $T_{400}(b)$ using NumPys \code{numpy.linalg.eig}. The results are shown in Figure \ref{example_bad_numerical_behaviour}, where we see that the eigenvalues do not cluster around $\Lambda(b)$ and even seem to stray away from $\Lambda(b)$ which is opposite to what we would expect. So what we see in Figure \ref{example_bad_numerical_behaviour} is likely a result of rounding errors, meaning that the algorithm instead computes the eigenvalues for a nearby matrix.

\begin{figure}[h!]
	\centering
	\begin{subfigure}[b]{0.49\textwidth}
		\centering
		\includegraphics[width=\textwidth]{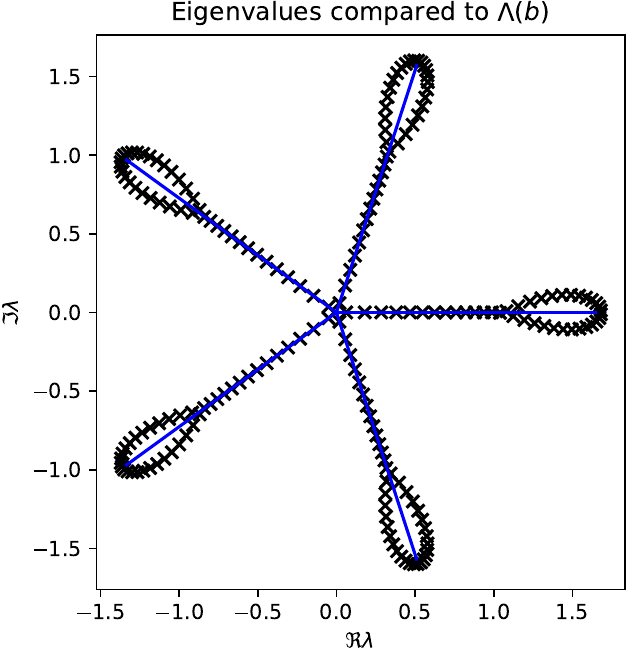}
		\caption{$\Lambda(b)$ and the computed eigenvalues of $T_{200}(b)$ for $b(t)=t^{-4}+t$.}
		\label{num_bad_conv_200}
	\end{subfigure}
	\hfill
	\begin{subfigure}[b]{0.49\textwidth}
		\centering
		\includegraphics[width=\textwidth]{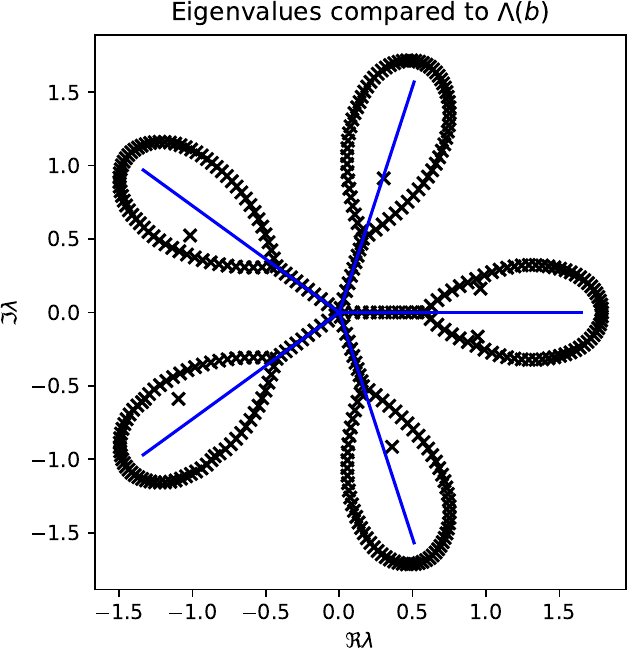}
		\caption{$\Lambda(b)$ and the computed eigenvalues of $T_{400}(b)$ for $b(t)=t^{-4}+t$.}
		\label{num_bad_conv_400}
	\end{subfigure}
	\caption{Examples of rounding error behavior for computed eigenvalues.}
	\label{example_bad_numerical_behaviour}
\end{figure}

In the recent article \cite{recent_structure}, it is stated that ``Finding this limiting set [refering to $\Lambda(b)$] nevertheless remains a challenge''. Then the authors propose an algorithm that is based on \cite{beam_warming}, which essentially looks at the roots of $Q(z, \lambda)$. This algorithm produces a set of points contained in the limit set and this subset of points, which we from here on denote the \textit{approximating subset} approaches $\Lambda(b)$ in the Hausdorff metric (see \eqref{hausdorff_metric_def} below) as the number of sampling points tend to infinity. The ``backbone'' of this algorithm is polynomial rootfinding. Instead of searching for $\Lambda(b)$ algebraically, we present a novel geometric approach, where the main computational task is computing polygon intersections. 

The idea behind our new approach presented in this article is to approximate $\Lambda(b)$ using \eqref{infinite_intersect}. To do this, we first show that only a compact set of $\rho$'s is needed in the intersection, and then we introduce polygons approximating $\spec T(b_\rho)$, which we denote by $\spec T(b_\rho^D)$. We show that the intersection of $\spec T(b_{\rho_i}^D)$ for a finite number of sampled $\rho_i$'s converges to $\Lambda(b)$ in the Hausdorff metric as the number of sampled $\rho_i$'s tend to infinity. This gives us a way to approximate $\Lambda(b)$; we first find the compact interval of $\rho$'s needed in \eqref{infinite_intersect}, then sample $\rho$'s in this interval, construct polygon approximations for $\spec T(b_{\rho_i})$, and finally compute the intersection which yields an \textit{approximating polygon} for $\Lambda(b)$. 

Further, we show that one can slightly expand the polygon approximations $\spec T(b_{\rho_i}^D)$ to make sure that the expanded polygons contain $\spec T(b_\rho)$. Computing the intersection of the expanded polygons then yields an \textit{approximating superset} of $\Lambda(b)$ that (contains $\Lambda(b)$ and) approaches $\Lambda(b)$ in the Hausdorff metric. Combining the approximating subset of the algebraic approach and the approximating superset from our geometric algorithm, we are able to give an explicit bound on the Hausdorff distance to $\Lambda(b)$.  Additionally, we argue but do not prove that the Hausdorff distance between $\Lambda(b)$ and the approximating superset should decrease as $O(1/\sqrt{k})$, where $k$ is the number of basic operations needed to run the algorithm. 

A related and very important topic when considering spectra of Toeplitz matrices is \textit{pseudospectra}. The $\epsilon$-\textit{pseudospectrum} $\spec_\epsilon A$ of a bounded linear operator $A$ on a Banach space can be defined as \begin{equation*}
	\spec_\epsilon A := \left\{\lambda \in \mathbb{C} : \norm{(A - \lambda I)^{-1}} \geq 1/\epsilon \right\},
\end{equation*} where the convention that $\norm{(A - \lambda I)^{-1}} = \infty$ if $\lambda \in \spec A$ is used. Whilst the spectrum of $T_n(a)$ can behave quite differently from the spectrum of $T(a)$, which especially is the case for banded symbols, the pseudospectrum is much more well behaved. Most notably we have for example that if $a$ is continuous, then \begin{equation}
	\lim_{n \to \infty} \spec_\epsilon T_n(a) = \spec_\epsilon T(a),
		\label{pseudospectra_convergence}
 \end{equation} with respect to the Hausdorff metric, see Böttcher \cite{pseudospectra_and_singular_values}. Also, for matrices with banded symbol $b$ it is known that the resolvent grows exponentially in $n$ for points around which $b(\mathbb{T})$ has nonzero winding number. 
 
 The eigenvalues of $T_n(a)$ give us information about e.g. the asymptotic stability of $e^{tT_n(a)}$, but in practice the pesudospectrum of $T_n(a)$ gives us much better information about $\norm{e^{tT_n(a)}}$, and especially when analyzing stability of non-linear systems it is of great importance to consider the pseudospectrum. Also, the behavior in Figure \ref{example_bad_numerical_behaviour} can be further understood by considering the pseudospectrum. One can show that if we consider a bounded linear operator $A$ on a Hilbert space $H$, i.e. $A \in \mathcal{B}(H)$, then \begin{equation*}
	\spec_\epsilon(A) = \bigcup_{E \in \mathcal{B}(H) :  \norm{E} \leq \epsilon} \spec{(A+E)}.
 \end{equation*} So, whilst the computed eigenvalues will not be in the spectrum of $T_n(b)$, they will be in $\spec_\epsilon(T_n(b))$ for $\epsilon$ chosen as the floating point rounding error. 
 
 For an informative introduction to pseudospectra, see \cite[Chapter~23]{hogben} written by Embree. Also see \cite[Chapter~3]{bottcher1999introduction} for a more thorough treatment of pseudospectra related to Toeplitz matrices and \cite{embree_trefethen} by Trefethen and Embree for a broad treatment of pseudospectra. 
  
Recently there has been interesting developments aiming at approximating the asymptotic distribution of the eigenvalues of $T_n(a)$ on $\Lambda(a)$ as $n \to \infty$. In \cite{matrix_less_real_eigs}, Ekström and Vassalos propose an algorithm for approximating the asymptotic distribution of eigenvalues in the case where $T_n(a)$ has real eigenvalues for all $n$ (note that this is not equivalent to $a$ being real valued) And in \cite{matrix_less_complex_eigenvals}, Bogoya et al. extend this idea to complex eigenvalues, notably including the case of banded Toeplitz matrices where $\Lambda(b)$ consists of only one analytic arc.

\subsection{Previous work on computing $\Lambda(b)$}
\label{previous_work_subsection}
A naive approach to calculating $\Lambda(b)$ is to sample a grid in $\mathbb{C}$ and check what points almost are in the set, by solving $b(z) = \lambda$ and looking at the root sizes. However, this approach is time consuming and it is not obvious how a point ``almost'' being in $\Lambda(b)$ should be interpreted. 

In \cite{recent_structure}, a better algorithm for calculating $\Lambda(b)$ is presented. The idea is: instead of sampling $\lambda$, sample $\varphi \in (0, 2 \pi)$ and solve \begin{equation}
	b(z) - b(ze^{i\varphi}) = 0. 
	\label{graph_intersect_property}
\end{equation} This approach works since if $\lambda \in \Lambda(b)$ then two of the roots of $b(z) - \lambda = 0$, $z_r(\lambda)$ and $z_{r+1}(\lambda)$ have the same modulus, so $z_{r+1}(\lambda)/z_{r}(\lambda) = e^{i \varphi}$, which implies that $z_r(\lambda)$ is a root of \eqref{graph_intersect_property}. Note that in the case $z_r(\lambda) = z_{r+1}(\lambda)$, \eqref{graph_intersect_property} is the equation $0=0$. But for double roots, the derivative also has a zero, so to find the cases where $z_r(\lambda)  = z_{r+1}(\lambda)$, one solves \begin{equation}
b'(z) = 0. 
\label{deriv_zero}
\end{equation} From solving \eqref{deriv_zero}, and \eqref{graph_intersect_property} for the sampled $\varphi$'s, we get some candidates $z_k$. For each of these candidates, we can calculate the corresponding $\lambda_k := b(z_k)$ and check each of these $\lambda_k$ for membership in $\Lambda(b)$ by solving the equation \begin{equation*}
b(z) = \lambda_k 
\end{equation*} and checking if the $r$'th and $(r+1)$'th smallest roots (when ordered by modulus) have the same absolute value. 

The output from this algorithm is a set of points belonging to $\Lambda(b)$, and it involves numerically finding the roots of multiple polynomials that have degree $r+s$. For fixed $r$ and $s$, the computational time is at most $O(N)$, where $N$ is the amount of sampled $\varphi$'s. Note that we are assuming that polynomial root finding is a fixed time process for a fixed degree polynomial. 

\begin{example}
	Consider the symbol \begin{equation}
		b(t) = -2t^{-1} + 4(1-i) + 7it -3(1+i)t^2 + t^3.
		\label{main_example_eq}
	\end{equation} This symbol is presented in \cite{recent_structure} as an example of where the numerically computed eigenvalues for $T_n(b_\rho)$ do not converge to the true $\Lambda(b)$, and where the computed eigenvalues vary with $\rho$, which they theoretically do not. We estimated $\Lambda(b)$ using the algebraic approach just presented. The results can be seen in Figure \ref{algebraic_alg_all_results}. 
	It took roughly 3 minutes to run the algorithm. For solving polynomials, our implementation uses NumPys standard implementation \code{numpy.roots} that computes the eigenvalues of the companion matrix. Also, when checking if the absolute values of the roots are equal, we accept an error of $10^{-7}$. 

\begin{figure}[H]
	\centering
	\begin{subfigure}[t]{0.49\textwidth}
		\centering
		\includegraphics[width=\textwidth]{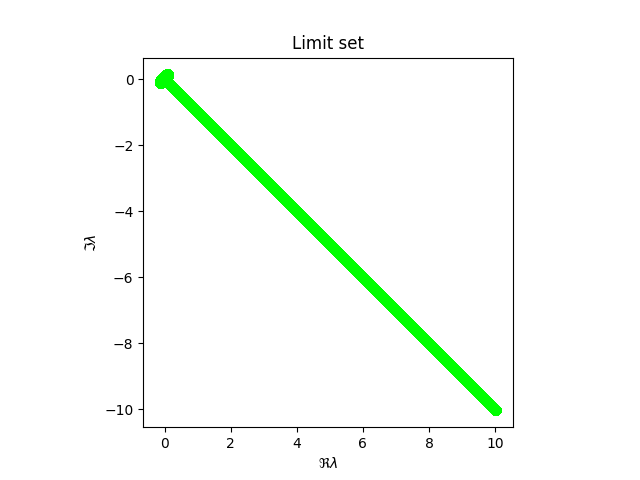}
		\caption{$\Lambda(b)$  estimated using the algebraic approach (from section \ref{previous_work_subsection}) with $2\cdot 10^5$ $\varphi$'s sampled uniformly in $[0, \pi]$.}
		\label{algebraic_alg_results}
	\end{subfigure}
	\hfill
	\begin{subfigure}[t]{0.49\textwidth}
		\centering
		\includegraphics[width=\textwidth]{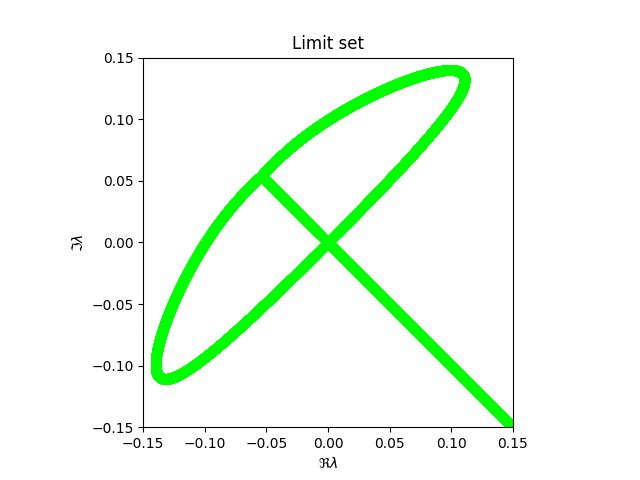}
		\caption{A zoomed in version of \ref{algebraic_alg_results}.}
		\label{algebraic_alg_results_zoom}
	\end{subfigure}
	
	\caption{The results from estimating $\Lambda(b)$ using the algebraic algorithm presented above with $b$ as in \eqref{main_example_eq}.}
	\label{algebraic_alg_all_results}
\end{figure}
\label{main_example}
\end{example}

All the code used for computing limit sets, both the algebraic approach just presented and our new geometric approach, is available at \cite{glisec}. 


\subsection{Outline}
This article is organized as follows. In Section \ref{theory_section} we present the theoretic background of our algorithm, and show that the approximating polygon and approximating superset being produced both converge to $\Lambda(b)$. In Section \ref{implementation_details_section} we discuss how the algorithm can be implemented. We also suggest a better way of sampling $\rho$'s for the intersection, and compare the naive and improved way of sampling $\rho$'s. In Section \ref{time_complexity_conv_speed_section} we reason about the time complexity and convergence speed of the algorithm. Lastly, we provide some concluding remarks in Section \ref{conclusion_section}.  

\section{A geometric approach}

\label{theory_section}

One way to look at the problem of finding $\Lambda(b)$ is the result of Schmidt and Spitzer \cite{schmidt_spitzer} presented in \eqref{limit_set_two_classification}, which says \begin{equation}
	\bigcap_{\rho \in (0, \infty)} \spec T(b_\rho) = \Lambda(b).
	\label{left_vs_right}
\end{equation} We can either work with the left hand side and try to compute it geometrically, or look at the right hand side and compute it algebraically, which is what previous works have done, see subsection \ref{previous_work_subsection}. We will now show a geometric approach to approximating $\Lambda(b)$, which involves approximating the left-hand side in \eqref{left_vs_right}.

The first observation that speaks in favor of $\cap_{\rho \in (0, \infty)} \spec T(b_\rho)$ even being possible to compute numerically is that not all $\rho$'s are needed. 
\begin{prop}
	Given a Laurent polynomial $b$, there exists $\rho_l, \rho_h$ with $0 < \rho_l \leq \rho_h < \infty$ such that \begin{equation}
		\Lambda(b) = \bigcap_{\rho \in (0, \infty)} \spec T(b_\rho) = \bigcap_{\rho \in [\rho_l, \rho_h]} \spec T(b_\rho).
		\label{compact_rhos_eq}
	\end{equation} Furthermore, examples of such $\rho_l$ and $\rho_h$ can be found by solving a polynomial equation with real coefficients. 
\label{compact_rhos_theorem}
\end{prop}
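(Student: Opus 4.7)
The plan is to argue that for $\rho$ outside some compact interval $[\rho_l,\rho_h]$ the spectrum $\spec T(b_\rho)$ becomes so large that it swamps the bounded set $\Lambda(b)$, so that only a compact range of scales contributes non-trivially to the intersection in \eqref{left_vs_right}. Fix $\rho_0 > 0$ (say $\rho_0 = 1$) and set $M := \sum_{n=-r}^{s}|b_n|\rho_0^n$, so that $\spec T(b_{\rho_0}) \subseteq \overline{D_M}$ where $D_M$ denotes the open disk of radius $M$ about the origin; in particular $\Lambda(b) \subseteq \overline{D_M}$. I will produce $\rho_l \leq \rho_0 \leq \rho_h$ with the property that $\overline{D_M} \subseteq \spec T(b_\rho)$ whenever $\rho \notin [\rho_l,\rho_h]$.

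The first step is a lower bound on $|b_\rho|$ along $\mathbb{T}$. The reverse triangle inequality gives
\[
  \min_{t \in \mathbb{T}} |b_\rho(t)| \geq |b_s|\rho^s - \sum_{n=-r}^{s-1}|b_n|\rho^n,
\]
and multiplying through by $\rho^r$ turns the condition that this lower bound exceed $M$ into a polynomial inequality $P_+(\rho) > 0$ with real coefficients; the polynomial $P_+(\rho) = |b_s|\rho^{r+s} - M\rho^r - \sum_{n=-r}^{s-1}|b_n|\rho^{n+r}$ has positive leading coefficient and value $-|b_{-r}| < 0$ at $\rho = 0$, so by the intermediate value theorem it has a largest positive real root $\rho_h^{\ast}$, and I take $\rho_h > \rho_h^{\ast}$. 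A symmetric construction based on the bound $|b_\rho(t)| \geq |b_{-r}|\rho^{-r} - \sum_{n=-r+1}^{s}|b_n|\rho^n$ produces a second polynomial $P_-$, with $P_-(0) > 0$ and negative leading coefficient, and I take $\rho_l$ strictly less than its smallest positive real root. This is the sense in which $\rho_l,\rho_h$ arise from solving polynomial equations with real coefficients.

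The next step upgrades this modulus control to a spectral containment via a Rouché-type argument. Fix $\rho \geq \rho_h$ and $\lambda \in \overline{D_M}$. Because $|\lambda| \leq M < \min_t |b_\rho(t)|$, the affine homotopy $u \mapsto b_\rho - u\lambda$ ($u \in [0,1]$) avoids $0$ on $\mathbb{T}$, so $\wind(b_\rho - \lambda) = \wind(b_\rho)$; a second homotopy $u \mapsto b_s\rho^s t^s + u\bigl(b_\rho(t) - b_s\rho^s t^s\bigr)$ is kept nonvanishing by exactly the lower bound from the previous paragraph, so $\wind(b_\rho) = \wind(b_s\rho^s t^s) = s \neq 0$. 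Combined with $\lambda \notin b_\rho(\mathbb{T})$, the Gohberg formula recalled in the introduction gives $\lambda \in \spec T(b_\rho)$, hence $\overline{D_M} \subseteq \spec T(b_\rho)$. The argument for $\rho \leq \rho_l$ is symmetric and yields winding number $-r \neq 0$; both cases rely on the standing nondegeneracy $r, s \geq 1$ observed in the introduction.

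With this in hand the proposition is immediate. The inclusion $\Lambda(b) \subseteq \cap_{\rho \in [\rho_l,\rho_h]} \spec T(b_\rho)$ is trivial from \eqref{left_vs_right}. Conversely, for $\lambda \notin \Lambda(b)$: if $\lambda \notin \overline{D_M}$ then $\lambda \notin \spec T(b_{\rho_0})$ with $\rho_0 \in [\rho_l,\rho_h]$, excluding $\lambda$ from the compact intersection; and if $\lambda \in \overline{D_M}$, then some $\rho^{\ast} > 0$ satisfies $\lambda \notin \spec T(b_{\rho^{\ast}})$ by definition of $\Lambda(b)$, and this $\rho^{\ast}$ must lie in $[\rho_l,\rho_h]$ since we just showed $\overline{D_M} \subseteq \spec T(b_\rho)$ outside this interval. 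The main technical subtlety I anticipate is verifying that both Rouché homotopies remain nonvanishing uniformly in $t \in \mathbb{T}$ and $u \in [0,1]$, which is precisely what the polynomial thresholds guarantee; a secondary point is confirming $\rho_0 = 1 \in [\rho_l, \rho_h]$, which follows from evaluating $P_\pm$ at $\rho = 1$ with the explicit choice $M = \sum_n |b_n|$.
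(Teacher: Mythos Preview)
Your proof is correct and follows essentially the same approach as the paper: bound $\Lambda(b)$ inside a disk using $\rho=1$, then show via the reverse triangle inequality that for $\rho$ outside a compact interval determined by real polynomial roots the curve $b_\rho(\mathbb{T})$ stays outside that disk with nonzero winding, forcing $\spec T(b_\rho)$ to swallow the disk. Your version is in fact slightly more explicit than the paper's on the winding-number step (you spell out the two homotopies, whereas the paper only sketches the ``dominant term'' heuristic and then asserts $\spec T(b_\rho)\supset B(0,K)$ directly from $|b_\rho(t)|>K$); the only cosmetic differences are your use of $M=\sum_n|b_n|$ in place of $K=\|b_1\|_\infty$ and taking $\rho_l,\rho_h$ strictly beyond the extremal roots rather than at them.
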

\begin{proof}
	The idea behind the proof is that for small enough $\rho$ the lowest order term in $b$ will dominate, so then $b_\rho(\mathbb{T})$ will roughly trace out $r$ circles of radius $\beta_{-r}\rho^{-r}$ winding in negative direction. Thus, intersecting with $\spec T(b_\rho)$ for small enough $\rho$ does not erase any new points. The same argument holds for large enough $\rho$. 
	
	Now, set $K := \norm{b_1}_\infty$ and choose $0 < \rho_l \leq 1 \leq \rho_h$ such that $\spec T(b_\rho) \supset B(0, K)$ for $\rho < \rho_l$ and $\rho_h < \rho$, where $B(0, K)$ is the closed ball centered at $0$ with radius $K$. Then we have \begin{equation*}
		\bigcap_{\rho \in (0, \infty)} \spec T(b_\rho) \supset B(0, K) \cap \Biggl(\,\bigcap_{\rho \in [\rho_l, \rho_h]} \spec T(b_\rho)\Biggr) = \bigcap_{\rho \in [\rho_l, \rho_h]} \spec T(b_\rho) \supset \Lambda(b),
	\end{equation*}
	where the equality holds since $\spec T(b_1) \subset B(0, K)$ and $\rho_l \leq \rho_h$. As the left-most and right-most sets are equal in the chain of inclusions, all sets must be equal, showing our desired result. 
	
	Now we motivate how $\rho_l$ and $\rho_h$ with the correct properties can be found. As before, $K = \norm{b_1}_\infty$. Let $\rho_l$ be the minimum of 1 and the smallest real positive solution to $\abs{\beta_{-r}}\rho^{-r} - \sum_{n=-r+1}^{s} \abs{\beta_n} \rho^n = K$. Hence $\abs{\beta_{-r}}\rho^{-r} - \sum_{n=-r+1}^{s} \abs{\beta_n} \rho^n > K$ for all $\rho < \rho_l$. We now estimate the size of $b_\rho(t)$. For $\rho$ in $0 < \rho < \rho_l$ and $t \in \mathbb{T}$, we have \begin{equation*}
		\begin{split}
			\abs{b_\rho(t)} = \abs{\sum_{n = -r}^{s} \beta_n \rho^n t^n} & \geq \abs{\beta_{-r}} \rho^{-r} - \abs{\sum_{n=-r+1}^{s}\beta_n \rho^n t^n} \\ & \geq \abs{\beta_{-r}}\rho^{-r} - \sum_{n=-r+1}^{s} \abs{\beta_n} \rho^n > K.
		\end{split}
	\end{equation*} Hence $\spec T(b_\rho) \supset B(0, K)$ for $\rho$ in $0 < \rho < \rho_l$. Analogously we can define $\rho_h$ as the maximum of 1 and the largest positive root to $\abs{\beta_{s}}\rho^{s} - \sum_{n=-r}^{s-1} \abs{\beta_n} \rho^n = K$ and do similar calculations to get that $\spec T(b_\rho) \supset B(0, K)$ for $\rho > \rho_h$, which concludes our proof. 
\end{proof}

So, only a compact set of $\rho$'s is needed to obtain $\Lambda(b)$. A natural approach to approximating $\Lambda(b)$ is to select a \textit{partition} $\rho_l = \rho_0 < \rho_1 < \ldots < \rho_{n-1} < \rho_{n} = \rho_h$ and compute \begin{equation}
	\bigcap_{j=0}^{n} \spec T(b_{\rho_j}).
	\label{finite_intersects}
\end{equation} The \textit{granularity} of the partition $(\rho_j)_{j=0}^{n}$ is defined as $$\Delta := \max_{0 \leq j \leq n-1} (\rho_{j+1} - \rho_j).$$ We will denote the set of all partitions of $[a, b]$ with granularity $\leq \Delta$ as $\mathcal{P}_{[a, b]}(\Delta)$. The question is now if \eqref{finite_intersects} will approach $\Lambda(b)$, and in what way? In what follows, we will show that \eqref{finite_intersects} approaches $\Lambda(b)$ in the \textit{Hausdorff metric} as the granularity of $(\rho_j)_{j=0}^n$ tends to $0$. Recall that the Hausdorff metric, denoted $d_H$, operates on the set $\mathcal{C}$ consisting of all the bounded non-empty closed subsets of $\mathbb{C}$. It is defined by \begin{equation}
d_H(A, B) := \inf \left\{\epsilon \, : \, A \subset (B)_\epsilon, B \subset (A)_\epsilon\right\},
\label{hausdorff_metric_def}
\end{equation} where $(X)_\epsilon$ denotes the \textit{$\epsilon$--fattening} of $X$, that is, \begin{equation*}
(X)_\epsilon := \bigcup_{x \in X} \left\{z \in \mathbb{C} \, : \, \abs{z -x} \leq \epsilon\right\}.
\end{equation*} As is well known, $(\mathcal{C}, d_H)$ forms a complete metric space, see e.g. \cite[p.\,121--124]{hausdorff_reference}. 

To show that \eqref{finite_intersects} approaches $\Lambda(b)$, we need the following lemma.
\begin{lemma}
	For a Laurent polynomial $b$, the map \begin{equation*}
		\rho \mapsto \spec T(b_\rho)
	\end{equation*} from $ (\mathbb{R}^+, |\cdot|) \to (\mathcal{C}, d_H)$ is continuous. 
\label{spec_continuous}
\end{lemma}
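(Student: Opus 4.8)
The plan is to use the explicit description of $\spec T(b_\rho)$ via winding numbers, namely $\spec T(b_\rho) = b_\rho(\mathbb{T}) \cup \{\lambda \notin b_\rho(\mathbb{T}) : \wind(b_\rho - \lambda) \neq 0\}$, and to leverage the fact that $(\rho, t) \mapsto b_\rho(t) = b(\rho t)$ is a jointly continuous (indeed real-analytic in $\rho$, trigonometric-polynomial in $t$) map. First I would fix $\rho_0 \in \mathbb{R}^+$ and a sequence $\rho_n \to \rho_0$, and show $d_H(\spec T(b_{\rho_n}), \spec T(b_{\rho_0})) \to 0$. The argument naturally splits into the two inclusions implicit in the definition \eqref{hausdorff_metric_def}: (i) every point of $\spec T(b_{\rho_0})$ is within $\epsilon$ of $\spec T(b_{\rho_n})$ for $n$ large, and (ii) vice versa.

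For direction (i), the key observation is that the curves $b_{\rho_n}(\mathbb{T})$ converge uniformly to $b_{\rho_0}(\mathbb{T})$ (since $|b(\rho_n t) - b(\rho_0 t)| \le \sum_{k=-r}^{s} |\beta_k|\,|\rho_n^k - \rho_0^k| \to 0$ uniformly in $t$). Hence $b_{\rho_0}(\mathbb{T}) \subset (b_{\rho_n}(\mathbb{T}))_\epsilon$ eventually. For a point $\lambda_0$ in the ``filled-in'' part with $\wind(b_{\rho_0} - \lambda_0) \neq 0$: if $\lambda_0$ is at distance $> \epsilon$ from $b_{\rho_0}(\mathbb{T})$, then by uniform convergence $\lambda_0 \notin b_{\rho_n}(\mathbb{T})$ for $n$ large and $\wind(b_{\rho_n} - \lambda_0) = \wind(b_{\rho_0} - \lambda_0) \neq 0$ (the winding number is locally constant under uniform perturbation of the curve that stays away from $\lambda_0$), so $\lambda_0 \in \spec T(b_{\rho_n})$ itself. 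The remaining points of $\spec T(b_{\rho_0})$ lie within $\epsilon$ of $b_{\rho_0}(\mathbb{T})$, hence within $2\epsilon$ of $b_{\rho_n}(\mathbb{T}) \subset \spec T(b_{\rho_n})$. Since all the sets here are uniformly bounded (Proposition-type estimates, or just $\|b_\rho\|_\infty$ bounded for $\rho$ near $\rho_0$), non-empty and closed, this stays inside $(\mathcal{C}, d_H)$.

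Direction (ii) is the more delicate one and I expect it to be the main obstacle. The issue is a point $\lambda_n \in \spec T(b_{\rho_n})$ with $\wind(b_{\rho_n} - \lambda_n) \neq 0$ that is "far" from $\spec T(b_{\rho_0})$; a priori such $\lambda_n$ could appear because the curve $b_{\rho_n}(\mathbb{T})$ encircles a region that the limiting curve only touches or misses. The standard way around this: suppose for contradiction that $d_H(\spec T(b_{\rho_n}), \spec T(b_{\rho_0})) \not\to 0$; passing to a subsequence, find $\lambda_n \in \spec T(b_{\rho_n})$ with $\mathrm{dist}(\lambda_n, \spec T(b_{\rho_0})) \ge \delta > 0$. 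By boundedness, $\lambda_n \to \lambda_*$ along a further subsequence, and $\mathrm{dist}(\lambda_*, \spec T(b_{\rho_0})) \ge \delta$, so in particular $\lambda_* \notin b_{\rho_0}(\mathbb{T})$ and $\wind(b_{\rho_0} - \lambda_*) = 0$. But $|\lambda_n - \lambda_*|$ small plus uniform convergence $b_{\rho_n} \to b_{\rho_0}$ forces, for $n$ large, that $b_{\rho_n}(\mathbb{T})$ avoids a fixed neighborhood of $\lambda_*$ and is a small perturbation of $b_{\rho_0}(\mathbb{T})$ there, whence $\wind(b_{\rho_n} - \lambda_n) = \wind(b_{\rho_0} - \lambda_*) = 0$ — contradicting $\lambda_n \in \spec T(b_{\rho_n})$ (since $\lambda_n$ is neither on the curve nor wound around). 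This closes direction (ii).

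Finally I would note that the Gohberg formula requires continuity of the symbol, which holds here since $b$ is a Laurent polynomial, so $b_\rho$ is continuous on $\mathbb{T}$ for every $\rho > 0$; and the uniform-boundedness claim needed to stay within $(\mathcal{C}, d_H)$ follows because for $\rho$ in a compact neighborhood of $\rho_0$ we have $\spec T(b_\rho) \subset \overline{B(0, R)}$ with $R = \sup_{\rho \text{ near } \rho_0}\|b_\rho\|_\infty$, using that $\spec T(b_\rho)$ equals the union of $b_\rho(\mathbb{T})$ with bounded complementary components. Assembling (i) and (ii) gives $d_H(\spec T(b_{\rho_n}), \spec T(b_{\rho_0})) \to 0$ for every sequence $\rho_n \to \rho_0$, i.e. continuity of $\rho \mapsto \spec T(b_\rho)$.
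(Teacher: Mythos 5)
Your proof is correct, but it takes a genuinely different route from the paper. The paper factors $\rho \mapsto \spec T(b_\rho)$ as a composition of three maps, $\rho \mapsto b_\rho$, $a \mapsto T(a)$, and $T(a) \mapsto \spec T(a)$, and then simply cites Hwang and Lee's theorem (\cite{cont_spectrum}) that the spectrum is continuous on Toeplitz operators with continuous symbol; the only work left is to notice the first two maps are continuous and apply the citation. You instead give a direct, self-contained argument using Gohberg's description $\spec T(b_\rho) = b_\rho(\mathbb{T}) \cup \{\lambda : \wind(b_\rho - \lambda) \neq 0\}$: direction (i) via uniform convergence of the curves plus local constancy of the winding number, and direction (ii) via a compactness/contradiction argument and homotopy invariance of winding numbers. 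Your approach has the advantage of being elementary and not relying on the external theorem; it also makes transparent exactly which geometric facts about the curves are at play. The paper's approach is shorter, isolates the difficulty into a well-known result, and in fact generalizes immediately to all continuous symbols (Hwang--Lee prove continuity on a considerably larger class). One small point to be explicit about: your contradiction in direction (ii) presumes that the failure of $d_H \to 0$ must come from the $\spec T(b_{\rho_n}) \not\subset (\spec T(b_{\rho_0}))_\epsilon$ half of the definition, which is justified only because you have already established direction (i) — worth stating that you are using (i) at that point.
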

\begin{proof}
	Let $(\mathcal{T}_C, \norm{\,\cdot\,})$ denote the space of Toeplitz operators with continuous symbol endowed with the operator norm. We see that our map is the composition of three maps, \begin{equation}
		\begin{split}
			\rho \mapsto b_\rho, & \quad (\mathbb{R}^+, |\cdot|) \to (C(\mathbb{T}), ||\cdot||_\infty), \\
			a \mapsto T(a), & \quad (C(\mathbb{T}), ||\cdot||_\infty) \to (\mathcal{T}_C, ||\cdot||), \\
			T(a) \mapsto \spec T(a), & \quad (\mathcal{T}_C, ||\cdot||) \to (\mathcal{C}, d_H).
		\end{split}
	\label{cont_composition}
	\end{equation}
The first and second maps in \eqref{cont_composition} are clearly continuous, and by \cite[Theorem 10]{cont_spectrum} the third map is also continuous. 
\end{proof}

\begin{remark}
	Recall that the map $A \mapsto \spec A$, from the set of bounded operators on a Hilbert space to $(\mathcal{C}, d_H)$ is not continuous is general, see e.g. \cite[Example 3.10]{perturbation_theory_kato}. According to \cite{cont_spectrum}, it was conjectured however that $A \mapsto \spec A$ is continuous on the manifold of Toeplitz operators. But this conjecture was proven false in \cite{spectrum_not_cont}, where the authors constructed symbols $a_n, a \in L^\infty(\mathbb{T})$ such that $\norm{a_n - a}_\infty \to 0$ but $d_H(\spec T(a_n), \spec T(a)) \not\rightarrow 0$. It should be noted that continuity is proven for a much larger class of symbols than $C(\mathbb{T})$ in \cite{cont_spectrum}.
\end{remark}

Now we can use Lemma \ref{spec_continuous} to show the promised result, that \eqref{finite_intersects} approaches $\Lambda(b)$ in the limit. 

\begin{theorem}
	Let $\rho_l$ and $\rho_h$ be bounds as in Theorem \ref{compact_rhos_theorem}. Then we have \begin{equation*}
		\lim_{\Delta \to 0} \sup_{\boldsymbol{\rho} \in \mathcal{P}(\Delta)} d_H\Biggl( \, \bigcap_{j=0}^{n} \spec T(b_{\rho_j}), \Lambda(b) \Biggr)  = 0.
	\end{equation*} 
where $\boldsymbol{\rho} = (\rho_j)_{j=0}^{n} $ and $\mathcal{P}(\Delta) = \mathcal{P}_{[\rho_l, \rho_h]}(\Delta)$.
 	\label{riemann_sum_like}
\end{theorem}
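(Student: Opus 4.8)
The plan is to reduce the double limit to a finite statement about the partition and then exploit uniform continuity of $\rho \mapsto \spec T(b_\rho)$ on the compact interval $[\rho_l, \rho_h]$, which is available from Lemma~\ref{spec_continuous}. The key point is that $\bigcap_{j=0}^n \spec T(b_{\rho_j})$ always sits between $\Lambda(b)$ and any single $\spec T(b_{\rho})$, so the Hausdorff distance to $\Lambda(b)$ is controlled entirely by how well a finite subfamily approximates the full intersection from outside.

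First I would record the trivial inclusion $\Lambda(b) = \bigcap_{\rho \in [\rho_l,\rho_h]} \spec T(b_\rho) \subset \bigcap_{j=0}^n \spec T(b_{\rho_j})$, valid for every partition by Proposition~\ref{compact_rhos_theorem}. This immediately gives $\Lambda(b) \subset \bigl(\bigcap_j \spec T(b_{\rho_j})\bigr)_\epsilon$ for every $\epsilon \geq 0$, so only the reverse fattening needs work: I must show that for every $\epsilon > 0$ there is $\Delta > 0$ such that for every partition $\boldsymbol{\rho} \in \mathcal{P}(\Delta)$ one has $\bigcap_{j=0}^n \spec T(b_{\rho_j}) \subset (\Lambda(b))_\epsilon$. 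Since each $\spec T(b_{\rho_j})$ is closed and bounded (it is compact, being $b_{\rho_j}(\mathbb{T})$ together with the bounded components where the winding number is nonzero, all inside $B(0,\|b_1\|_\infty)$ for $\rho_j$ near $1$, and uniformly bounded on the compact $\rho$-interval), the finite intersection is a legitimate element of $(\mathcal{C}, d_H)$, so the statement is well posed.

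Next I would use uniform continuity. By Lemma~\ref{spec_continuous} the map $\rho \mapsto \spec T(b_\rho)$ is continuous on $\mathbb{R}^+$, hence uniformly continuous on the compact interval $[\rho_l, \rho_h]$: given $\epsilon > 0$ there is $\Delta > 0$ with $d_H(\spec T(b_\rho), \spec T(b_{\rho'})) < \epsilon$ whenever $|\rho - \rho'| \leq \Delta$. Now fix a partition with granularity $\leq \Delta$ and take any $\lambda \in \bigcap_{j=0}^n \spec T(b_{\rho_j})$. For an arbitrary $\rho \in [\rho_l, \rho_h]$, pick the partition point $\rho_j$ with $|\rho - \rho_j| \leq \Delta$; then $\lambda \in \spec T(b_{\rho_j}) \subset (\spec T(b_\rho))_\epsilon$, so $\lambda$ lies within $\epsilon$ of $\spec T(b_\rho)$. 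As $\rho$ was arbitrary, $\lambda$ is within $\epsilon$ of $\bigcap_{\rho \in [\rho_l,\rho_h]} \spec T(b_\rho) = \Lambda(b)$ --- this last step requires that "within $\epsilon$ of every $\spec T(b_\rho)$" implies "within $\epsilon$ of the intersection," which is where the real content lies. Combining the two inclusions yields $d_H(\bigcap_{j=0}^n \spec T(b_{\rho_j}), \Lambda(b)) \leq \epsilon$ for every $\boldsymbol{\rho} \in \mathcal{P}(\Delta)$, hence the supremum over $\mathcal{P}(\Delta)$ is $\leq \epsilon$, and letting $\epsilon \to 0$ finishes the proof.

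The main obstacle is precisely the italicized step: being close to each set individually does not in general imply being close to their intersection (nested shrinking sets can stay uniformly close pairwise while the intersection is far). Here it works because $\Lambda(b)$ is exactly the intersection over a \emph{compact} parameter interval and the family is \emph{equicontinuous} (indeed uniformly continuous) in $\rho$: I would make this rigorous by a compactness/covering argument. Suppose, for contradiction, some $\lambda$ is within $\epsilon$ of every $\spec T(b_{\rho})$ but at distance $\geq 2\epsilon$ from $\Lambda(b)$; for each $\rho$ choose $z_\rho \in \spec T(b_\rho)$ with $|\lambda - z_\rho| \leq \epsilon$, and use uniform continuity to show the map $\rho \mapsto z_\rho$ can be chosen so that the points $z_\rho$ persist into nearby spectra, eventually producing a point in $\bigcap_\rho \spec T(b_\rho) = \Lambda(b)$ within $2\epsilon$ of $\lambda$. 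Alternatively --- and more cleanly --- one can avoid this subtlety by first proving the statement for the \emph{nested} finite intersections and invoking that for compact sets $K_\alpha$ with $\bigcap K_\alpha = K$, any finite subcover gives $d_H$-closeness once the index set is $\Delta$-dense, using the uniform bound $d_H(\spec T(b_{\rho_j}), \spec T(b_\rho)) < \epsilon$ to squeeze each $\spec T(b_\rho)$ between consecutive partition spectra up to an $\epsilon$-fattening; this reduces everything to the finite intersection $\bigcap_{j} (\spec T(b_{\rho_j}))_\epsilon \subset (\Lambda(b))_{\epsilon}$, which follows because every $\spec T(b_\rho)$ with $\rho \in [\rho_l,\rho_h]$ is within $\epsilon$ of some $\spec T(b_{\rho_j})$ in both directions.
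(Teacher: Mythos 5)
Your overall route is the same as the paper's: uniform continuity of $\rho \mapsto \spec T(b_\rho)$ on the compact interval $[\rho_l, \rho_h]$ via Lemma~\ref{spec_continuous}, followed by a fattening argument comparing the finite intersection with the full one. You are also right to flag the step you call ``where the real content lies'': from $\lambda$ being within $\epsilon$ of \emph{every} $\spec T(b_\rho)$ it does not follow in general that $\lambda$ is within $\epsilon$ (or even within $C\epsilon$ for any universal $C$) of the intersection. For a Hausdorff-continuous family of compacta over a compact parameter interval the identity $\bigl(\bigcap_\rho A_\rho\bigr)_\epsilon = \bigcap_\rho (A_\rho)_\epsilon$ can fail: take $A_\rho = \{0\} \cup \{10 + 3e^{2\pi i\rho}\}$, $\rho \in [0,1]$, so that $\bigcap_\rho A_\rho = \{0\}$, yet $\lambda = 10$ lies within $\epsilon = 3$ of every $A_\rho$ while $\abs{\lambda - 0} = 10 > 2\epsilon$. (The paper's proof at this point writes the same identity and uses the containment $\bigcap_\rho(\cdot)_\epsilon \subset (\bigcap_\rho \cdot)_\epsilon$ without proof, so you have correctly located a point that the paper itself treats too quickly; but the conclusion you needed to establish is exactly that containment.)

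The difficulty is that the two fixes you sketch do not close this gap. The contradiction argument --- choosing $z_\rho \in \spec T(b_\rho)$ with $\abs{\lambda - z_\rho}\le\epsilon$ and claiming the $z_\rho$ ``persist into nearby spectra, eventually producing a point in $\Lambda(b)$ within $2\epsilon$ of $\lambda$'' --- is not justified, and the $2\epsilon$ claim is false at fixed $\epsilon$ by the example above. Your alternative reduces matters to $\bigcap_j(\spec T(b_{\rho_j}))_\epsilon \subset (\Lambda(b))_\epsilon$, but the stated reason (each $\spec T(b_\rho)$ is within $\epsilon$ of some $\spec T(b_{\rho_j})$) only yields $\bigcap_j(\spec T(b_{\rho_j}))_\epsilon \subset \bigcap_\rho(\spec T(b_\rho))_{2\epsilon}$, i.e.\ you are back to passing from an intersection of fattenings to a fattening of the intersection.

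What closes the gap is a two-parameter argument with a second, smaller tolerance. For $\delta > 0$ put $K_\delta := \bigcap_{\rho\in[\rho_l,\rho_h]}\bigl(\spec T(b_\rho)\bigr)_\delta$. The $K_\delta$ are compact, nested decreasing in $\delta$, and since each $\spec T(b_\rho)$ is closed, $\bigcap_{\delta>0}K_\delta = \Lambda(b)$. For nested compacta decreasing to $\Lambda(b)$ one has $d_H(K_\delta,\Lambda(b))\to 0$ as $\delta\downarrow 0$: if $x_n \in K_{\delta_n}$, $\delta_n\downarrow 0$, with $\text{dist}(x_n,\Lambda(b))\ge\epsilon$, a subsequential limit $x$ lies in every $K_\delta$, hence in $\Lambda(b)$, contradiction. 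So given $\epsilon>0$, first choose $\delta>0$ with $K_\delta\subset(\Lambda(b))_\epsilon$, and only then invoke uniform continuity at scale $\delta$: pick $\Delta$ so that $\abs{\rho-\rho'}<\Delta$ gives $d_H(\spec T(b_\rho),\spec T(b_{\rho'}))<\delta$. For any partition of granularity $<\Delta$ and any $\lambda\in\bigcap_j\spec T(b_{\rho_j})$, every $\rho$ has some $\rho_j$ within $\Delta$, whence $\lambda\in\spec T(b_{\rho_j})\subset(\spec T(b_\rho))_\delta$; thus $\lambda\in K_\delta\subset(\Lambda(b))_\epsilon$. Combined with the trivial inclusion $\Lambda(b)\subset\bigcap_j\spec T(b_{\rho_j})$, this gives $d_H(\bigcap_j\spec T(b_{\rho_j}),\Lambda(b))\le\epsilon$ uniformly over partitions of granularity $<\Delta$. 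Your compactness intuition was on the right track, but the key move is to decouple the spectral tolerance $\delta$ from the target tolerance $\epsilon$ rather than working at a single scale.
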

\begin{proof}
	From Lemma \ref{spec_continuous} we have that the map $\rho \mapsto \spec T(b_\rho)$ from $[\rho_l, \rho_h] \to \mathcal{C}$ is uniformly continuous, since $[\rho_l, \rho_h]$ is compact. So for a given $\epsilon > 0$ there is a $\delta > 0$ such that for all $\rho, \rho^* \in [\rho_l, \rho_h]$ with $\abs{\rho - \rho^*} < \delta$ we have $d_H(\spec T(b_{\rho^*}), \spec T(b_\rho)) < \epsilon$. It thus suffices to show that for all partitions $(\rho_j)_{j=0}^{n}$ with granularity $\Delta < 2\delta$ we have $$d_H\Biggl(\,\bigcap_{j=0}^{n} \spec T(b_{\rho_j}), \bigcap_{\rho \in [\rho_l, \rho_h]} \spec T(b_{\rho})\Biggr) \leq \epsilon.$$ Naturally we always have $\bigcap_{\rho \in [\rho_l, \rho_h]} \spec T(b_{\rho}) \subset \left(\bigcap_{j=0}^{n} \spec T(b_{\rho_j})\right)_\epsilon$. So we need only to prove the opposite inclusion. To do this, take an arbitrary \begin{equation*}
		 \lambda \notin \Biggl(\,\bigcap_{\rho \in [\rho_l, \rho_h]} \spec T(b_{\rho})\Biggr)_\epsilon = \bigcap_{\rho \in [\rho_l, \rho_h]} \bigl( \spec T(b_{\rho})\bigr)_\epsilon.
	\end{equation*} So there is a $\rho^* \in [\rho_l, \rho_h]$ such that $\lambda \notin (\spec T(b_{\rho^*}))_\epsilon$. Since $\Delta < 2 \delta$ there is a $\rho_k$ in the partition with $\abs{\rho^* - \rho_k} < \delta$. This implies $d_H(\spec T(b_{\rho^*}), \spec T(b_\rho)) < \epsilon$, which means $(\spec T(b_{\rho^*}))_\epsilon \supset \spec T(b_{\rho_k})$. So $\lambda \notin \spec T(b_{\rho_k})$, and we are done. 
\end{proof}

Theorem \ref{riemann_sum_like} hints at great possibilities for computing $\Lambda(b)$ numerically. But it is difficult to represent $\spec T(b_\rho)$ numerically. A possible solution is to approximate $\spec T(b_\rho)$ by a polygon with vertices sampled along $b_\rho(\mathbb{T})$. In what follows we shall show that such a polygon approximation indeed gives the desired result. We will partition $[0, 2\pi]$ with $0 = v_0 < v_1 < \ldots < v_{n-1} < v_{n} = 2 \pi$. The granularity for $(v_j)_{j=0}^{n}$ is defined as before. A partition of $[0, 2\pi]$ defines a polygon discretization $b_\rho^D$ approximating $b_\rho$, of the form $b_\rho^D : \mathbb{T} \to \mathbb{C}$, \begin{equation}
	b_\rho^D(e^{iv}) := \frac{v-v_j}{v_{j+1}-v_j} \left(b_\rho(e^{iv_{j+1}}) - b_\rho(e^{iv_{j}})\right) + b_\rho( e^{iv_{j}}), \; \; v \in [v_j, v_{j+1}).
	 \label{polygon_definition}
\end{equation} Below, when we write $b_\rho^D$ it will always be with respect to a partition $(v_j)_{j=0}^n$ which should be clear from the context. Since $b_\rho^D$ is a continuous symbol, we have \begin{equation*}
\spec T(b_\rho^D) = b_\rho^D(\mathbb{T}) \cup \left\{\lambda \in \mathbb{C} : \wind(b_\rho^D - \lambda) \neq 0\right\}.
\end{equation*} Naturally, we wish to bound the Hausdorff distance between the spectrum of $T(b_\rho)$ and $T(b_\rho^D)$. 

\begin{prop}
	Let $b$ be a Laurent polynomial and $b_\rho^D$ a polygon discretization of $b_\rho$ as is \eqref{polygon_definition}. Then \begin{equation*}
		d_H\bigl(\spec T(b_\rho), \spec T(b_\rho^D)\bigr) \leq \norm{b_\rho - b_\rho^D}_\infty.
	\end{equation*}
\label{expanded_polygon_prop}
\end{prop}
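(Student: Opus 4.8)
The plan is to reduce the statement to a general fact about Toeplitz operators with continuous symbols: if $a, c \in C(\mathbb{T})$ then $d_H(\spec T(a), \spec T(c)) \leq \norm{a - c}_\infty$. Applying this with $a = b_\rho$ and $c = b_\rho^D$ (both continuous, since $b_\rho^D$ is piecewise linear and continuous along $\mathbb{T}$) gives the proposition immediately. So the work is in proving the general inequality, and for this I would use Gohberg's description of the spectrum quoted in the introduction, namely $\spec T(a) = a(\mathbb{T}) \cup \{\lambda : \wind(a - \lambda) \neq 0\}$, together with the fact that $\mathbb{C} \setminus \spec T(a)$ is exactly the set of $\lambda$ for which $a - \lambda$ does not vanish on $\mathbb{T}$ and has winding number zero.

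The core step is to show that if $\lambda \notin \spec T(a)$ and $\norm{a - c}_\infty < \dist(\lambda, \spec T(a))$, then $\lambda \notin \spec T(c)$; equivalently, if $\lambda \notin (\spec T(a))_\epsilon$ where $\epsilon = \norm{a-c}_\infty$, then $\lambda \notin \spec T(c)$. Since $\lambda \notin (\spec T(a))_\epsilon$, the whole disk $B(\lambda, \epsilon)$ avoids $\spec T(a)$, hence avoids $a(\mathbb{T})$, and $\wind(a - \mu) = 0$ for every $\mu$ in that disk (the winding number is constant on connected components of the complement of $a(\mathbb{T})$). Now $\abs{(a(t) - \lambda) - (c(t) - \lambda)} = \abs{a(t) - c(t)} \leq \epsilon$ for all $t$, while $\abs{a(t) - \lambda} > \epsilon$ since $\lambda \notin a(\mathbb{T})_\epsilon$; so $c - \lambda$ does not vanish on $\mathbb{T}$, and by the dog-on-a-leash (Rouché-type) argument $c - \lambda$ is homotopic to $a - \lambda$ through nonvanishing functions, giving $\wind(c - \lambda) = \wind(a - \lambda) = 0$. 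By Gohberg's formula $\lambda \notin \spec T(c)$. This shows $\spec T(c) \subset (\spec T(a))_\epsilon$, and by symmetry in $a$ and $c$ we get $\spec T(a) \subset (\spec T(c))_\epsilon$, hence $d_H(\spec T(a), \spec T(c)) \leq \epsilon = \norm{a - c}_\infty$.

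I would then close by specializing: $b_\rho$ is a (Laurent polynomial hence) continuous symbol, and $b_\rho^D$ as defined in \eqref{polygon_definition} is continuous on $\mathbb{T}$ (the one-sided limits at each $v_j$ agree with $b_\rho(e^{iv_j})$), so the general inequality applies and yields $d_H\bigl(\spec T(b_\rho), \spec T(b_\rho^D)\bigr) \leq \norm{b_\rho - b_\rho^D}_\infty$.

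The main obstacle is making the winding-number comparison rigorous: one must be careful that $a - \lambda$ and $c - \lambda$ are genuinely nonvanishing on $\mathbb{T}$ (which needs the strict inequality $\dist(\lambda, a(\mathbb{T})) > \epsilon$, coming from $\lambda$ lying strictly outside the $\epsilon$-fattening, or else an approximation argument with $\epsilon' > \epsilon$) and that the straight-line homotopy $s \mapsto s(c - \lambda) + (1-s)(a - \lambda)$ stays nonvanishing, which again follows from $\abs{a - c} \leq \epsilon < \abs{a - \lambda}$ pointwise. A secondary technical point is handling the boundary case $\lambda \in \partial\bigl((\spec T(a))_\epsilon\bigr)$, i.e. $\dist(\lambda,\spec T(a)) = \epsilon$ exactly; this is dealt with by proving the inclusion $\spec T(c) \subset (\spec T(a))_{\epsilon'}$ for every $\epsilon' > \norm{a-c}_\infty$ and then letting $\epsilon' \downarrow \norm{a-c}_\infty$, using that $(\spec T(a))_\epsilon = \bigcap_{\epsilon' > \epsilon}(\spec T(a))_{\epsilon'}$ is closed.
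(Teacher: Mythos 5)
Your proof is correct, and it takes a genuinely different and more streamlined route than the paper. You reduce the proposition to a general Lipschitz-type inequality $d_H(\spec T(a), \spec T(c)) \leq \norm{a-c}_\infty$ for arbitrary continuous symbols, proved via a standard Rouch\'e/homotopy argument: if $\lambda \notin (\spec T(a))_\epsilon$ with $\epsilon = \norm{a-c}_\infty$, then $\abs{a(t)-\lambda} > \epsilon$ on $\mathbb{T}$, so the linear homotopy $h_s = (a-\lambda) + s(c-a)$ stays nonvanishing, whence $\wind(c-\lambda) = \wind(a-\lambda) = 0$ and $c-\lambda$ is nonvanishing, so by Gohberg's formula $\lambda \notin \spec T(c)$; symmetry in $a$ and $c$ finishes. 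The paper instead works directly with the polygonal structure of $b_\rho^D$: it parameterizes both curves as $r(v)e^{i\theta(v)}$ and $r_D(v)e^{i\theta_D(v)}$, decomposes the winding numbers as telescoping sums over the partition $(v_j)$, and for each subinterval runs a geometric case analysis (projection onto the chord, rays cutting the plane) to show $\theta(v_j)-\theta(v_{j-1}) = \theta_D(v_j)-\theta_D(v_{j-1})$. Both arguments establish the same winding-number equality, but yours abstracts away the specifics of the discretization and is shorter and more general; the paper's is self-contained at the level of elementary plane geometry and perhaps more transparent about why partition endpoints (where $b_\rho$ and $b_\rho^D$ agree) are the natural places to split the winding. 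One minor simplification available to you: because the paper's $(X)_\epsilon$ uses \emph{closed} disks, $\lambda \notin (\spec T(a))_\epsilon$ already forces $\dist(\lambda, \spec T(a)) > \epsilon$ strictly, so the secondary $\epsilon' \downarrow \epsilon$ limiting argument you flag as a technical point is not actually needed.
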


\begin{proof}
	Take an arbitrary $\delta_\rho > \norm{b_\rho - b_\rho^D}_\infty$. Our goal is to show $(\spec T(b_\rho))_{\delta_{\rho}} \supset \spec T(b_\rho^D)$ and $(\spec T(b_\rho^D))_{\delta_{\rho}} \supset \spec T(b_\rho)$, which we can do through a geometric argument. We will proceed by taking $\lambda$ either $\notin (\spec T(b_\rho))_{\delta_{\rho}}$ or $\notin (\spec T(b_\rho^D))_{\delta_{\rho}}$ and aim at proving that then $\lambda \notin \spec T(b_\rho^D)$ or $\lambda \notin \spec T(b_\rho)$, respectively. Since $a({\mathbb{T}}) \subset \spec T(a)$ for any continuous symbol $a$, we have by choice of $\delta_\rho$ that $\inf_{t \in \mathbb{T}} \abs{b_\rho(t) - \lambda} > 0$ and $\inf_{t \in \mathbb{T}} \abs{b_\rho^D(t) - \lambda} > 0$. Due to this, we can parameterize \begin{equation*}
		b_\rho(e^{iv})  - \lambda = r(v)e^{i\theta(v)} \; \text{ and } \; b_\rho^D(e^{iv}) - \lambda = r_D(v)e^{i\theta_D(v)},
	\end{equation*} with continuous $r, r_D, \theta, \theta_D$. Recall that $b_\rho^D(e^{iv_j}) = b_\rho(e^{iv_j})$, where $(v_j)_{j=0}^n$ is the partition of $[0, 2\pi]$ defining $b_\rho^D$. The winding numbers can thus be written as \begin{equation}
			\begin{split}
					\wind (b_\rho^D - \lambda) & = \frac{1}{2\pi}\bigl(\theta_D(v_n) - \theta_D(v_{0})\bigr) = \frac{1}{2\pi}\sum_{j=1}^n \theta_D(v_j) - \theta_D(v_{j-1}), \\
					\wind (b_\rho - \lambda) & = \frac{1}{2\pi}\bigl(\theta(v_n) - \theta(v_{0})\bigr) = \frac{1}{2\pi}\sum_{j=1}^n \theta(v_j) - \theta(v_{j-1}).
				\end{split}
			\label{winding_number_comparision}
	\end{equation} Our strategy is to show that $\theta_D(v_j) - \theta_D(v_{j-1}) = \theta(v_j) - \theta(v_{j-1})$ for all $j$. This will then imply that $\wind(b_\rho^D - \lambda) = \wind(b_\rho - \lambda)$ and we arrive at the desired conclusion. Begin with $j = 1$, and assume that $\theta(0) = \theta_D(0)$. Denote $b_\rho^D(v_0)$ by $A$ and  $b_\rho^D(v_1)$ by $B$ and the line through $A$ and $B$ by $l$. There are two cases to consider: the first case is when $A \neq B$  and the projection $P$ of $\lambda$ onto $l$ lies on the segment $AB$ (see Figure \ref{ill_first_case}), the second case is when $A = B$ or the projection $P$ of $\lambda$ onto $l$ does not lie on the segment $AB$ (see Figure \ref{ill_second_case}).

\begin{figure}
	\centering
	\begin{subfigure}[t]{0.49\textwidth}
		\definecolor{xdxdff}{rgb}{0.49,0.49,1}
		\definecolor{uququq}{rgb}{0.25,0.25,0.25}
		\definecolor{qqqqff}{rgb}{0,0,1}
		\begin{tikzpicture}[line cap=round,line join=round,>=triangle 45,x=0.6cm,y=0.6cm]
			\clip(4.26,-1.31) rectangle (12.97,4.69);
			\draw (4.62,1.3)-- (12.53,4.11);
			\draw [domain=8.19341856261866:17.706811897606272, color=red] plot(\x,{(--70.22-7.99*\x)/2.84});
			\draw [shift={(7.93,4.52)}] plot[domain=3.91:6.2,variable=\t]({1*4.62*cos(\t r)+0*4.62*sin(\t r)},{0*4.62*cos(\t r)+1*4.62*sin(\t r)});
			\draw [dashed, domain=5:8.19341856261866, color=black] plot(\x,{(--70.22-7.99*\x)/2.84});
			\begin{scriptsize}
				\fill [color=black] (4.62,1.3) circle (1.5pt);
				\draw[color=black] (4.7,1.63) node {$B$};
				\fill [color=black] (12.53,4.11) circle (1.5pt);
				\draw[color=black] (12.6,4.44) node {$A$};
				\fill [color=black] (8.19,1.68) circle (1.5pt);
				\draw[color=black] (8.47,1.81) node {$\lambda$};
				\fill [color=black] (7.91,2.47) circle (1.5pt);
				\draw[color=black] (8.1,2.79) node {$P$};
				\draw[color=black] (9.62,-1.1) node {$r_\lambda$};
				\fill [color=black] (8.8,-0.02) circle (1.5pt);
				\draw[color=black] (8.97,0.31) node {$C$};
				\draw[color=black] (11.50,0.31) node {$b_\rho([v_0, v_1])$};
				\draw[color=black] (9.9,3.8) node {$b^D_\rho([v_0, v_1])$};
			\end{scriptsize}
		\end{tikzpicture}
		\caption{Illustration of the first case.}
		\label{ill_first_case}
	\end{subfigure}
	\hfill
	\begin{subfigure}[t]{0.49\textwidth}
		\definecolor{ffffff}{rgb}{1,1,1}
		\definecolor{xdxdff}{rgb}{0.49,0.49,1}
		\definecolor{qqqqff}{rgb}{0,0,1}
		\begin{tikzpicture}[line cap=round,line join=round,>=triangle 45,x=0.45cm,y=0.45cm]
			\clip(3.9,-2.0) rectangle (15.95,5);	
			\draw [domain=13.801320330119172:15.95, color=red] plot(\x,{(--10.61-0.56*\x)/0.87});
			
			\draw [rotate around={-173.6:(9.49,2.4)}] (9.49,2.4) ellipse (5.24*0.45cm and 1.72*0.45cm);
			\fill[line width=1.2pt,color=ffffff,fill=ffffff,fill opacity=1.0] (4.62,1.3) -- (12.53,4.11) -- (12.59,4.93) -- (3.17,3.55) -- (3,-0.83) -- cycle;
			\draw (4.62,1.3)-- (12.53,4.11);
			\draw [dashed, domain=8:13.801320330119172, color=black] plot(\x,{(--10.61-0.56*\x)/0.87});
			\begin{scriptsize}
				\fill [color=black] (4.62,1.3) circle (1.5pt);
				\draw[color=black] (4.7,1.63) node {$B$};
				\fill [color=black] (12.53,4.11) circle (1.5pt);
				\draw[color=black] (12.6,4.44) node {$A$};
				\fill [color=black] (13.8,3.29) circle (1.5pt);
				\draw[color=black] (13.5,3.12) node {$\lambda$};
				\fill [color=black] (14.67,2.73) circle (1.5pt);
				\draw[color=black] (15.00,2.95) node {$C$};
				\draw[color=black] (15.50,1.9) node {$r_\lambda$};
				\draw[color=black] (10.50,0.3) node {$b_\rho([v_0, v_1])$};
				\draw[color=black] (8.6,3.5) node {$b^D_\rho([v_0, v_1])$};
			\end{scriptsize}
		\end{tikzpicture}
		\caption{Illustration of the second case.}
		\label{ill_second_case}
	\end{subfigure}
	\caption{Illustrations of the construction in the proof of Proposition \ref{expanded_polygon_prop}. The notation $b_\rho([v_0, v_1])$ and $b^D_\rho([v_0, v_1])$ is short for $b_\rho(\exp(i[v_0, v_1]))$ and $b^D_\rho(\exp(i[v_0, v_1]))$ respectively.}
	\label{illustrations_fig}
\end{figure}

	In the first case, consider the line through $P$ and $\lambda$. The choice of $\delta_{\rho}$ ensures that $P \neq \lambda$. Note that $P\lambda \perp AB$. Cut the line through $P$ and $\lambda$ at $\lambda$ so we end up with a ray (plotted in red in Figure \ref{ill_first_case}) from $\lambda$ not intersecting $AB$. Denote this ray by $r_\lambda$. Assume for contradiction that $r_\lambda$ intersects $b_\rho(e^{iv})$ for some $v \in [v_0, v_1]$, say at the point $C$, see Figure \ref{ill_first_case} for an illustration. As $ \abs{CP} \leq \norm{b_\rho - b_\rho^D}_\infty < \delta_\rho$, this is a contradiction since $\abs{P\lambda} \leq \abs{CP}$ and $\abs{\lambda C} \leq \abs{CP}$ (since $\lambda$ lies on the segment $CP$) but we have assumed that either $\lambda \notin (\spec T(b_\rho))_{\delta_{\rho}}$ or $\lambda \notin (\spec T(b_\rho^D))_{\delta_{\rho}}$. Hence neither $b_\rho(e^{iv})$ nor $b_\rho^D(e^{iv})$ intersect $r_\lambda$ for $v \in [v_0, v_1]$, so all these points belong to the same cut plane, namely $\mathbb{C} \setminus r_\lambda$. Therefore $\theta(v_1) = \theta_D(v_1)$ since $\theta(v_0) = \theta_D(v_0)$ and the property that $b_\rho(e^{iv})$ and $b_\rho^D(e^{iv})$ stay in the same cut plane for $v \in [v_0, v_1]$ implies that $\abs{\theta(v) - \theta_D(v)} < 2 \pi$ for $v \in [v_0, v_1]$, making $\theta(v) - \theta_D(v) = 2 \pi k$ impossible for a non zero integer $k$.
	
	We can handle the second case in a similar manner. Assume without loss of generality that $A$ is closer to $\lambda$ than $B$, and let $r_\lambda$ be the ray (plotted in red in Figure \ref{ill_second_case}) originating in $\lambda$ that lies on the line through $A$ and $\lambda$ and does not contain $A$. Assume as before that $r_\lambda$ intersects $b_\rho(e^{iv}), v \in [v_0, v_1]$ at some point $C$, see Figure \ref{ill_second_case} for an illustration. By definition of $\delta_\rho$, we see that $\abs{AC} < \delta_\rho$, and using a similar argument as in the first case gives that neither $b_\rho(e^{iv})$ nor $b_\rho^D(e^{iv})$ can intersect $r_\lambda$ for $v \in [v_0, v_1]$. This implies that $\theta(v_1) = \theta_D(v_1)$. Note that the case $A = B$ is also handled by this construction. 
	
	All in all we have shown that $\theta_D(v_j) - \theta_D(v_{j-1}) = \theta(v_j) - \theta(v_{j-1})$ for $j = 1$ and the argument can be continued inductively which shows it for all $1 \leq j \leq n$. From \eqref{winding_number_comparision} we now get that $\wind(b_\rho - \lambda) = \wind(b_\rho^D - \lambda)$ which means that $\lambda \notin (\spec T(b_\rho))_{\delta_{\rho}} \Rightarrow \lambda \notin \spec T(b_\rho^D)$ and $\lambda \notin (\spec T(b_\rho^D))_{\delta_{\rho}} \Rightarrow \lambda \notin \spec T(b_\rho)$, which is what we wanted to show.
	\end{proof}

Proposition \ref{expanded_polygon_prop} gives us a good bound on the error introduced by the polygon approximation. Next we bound $\norm{b_\rho - b_\rho^D}_\infty$ and use this to get a uniform bound for the error. 

\begin{lemma}
	Let $\rho_l$ and $\rho_h$ be bounds as in Proposition \ref{compact_rhos_theorem} and let $(v_j)_{j=0}^{n}$ be a partition of $[0, 2\pi]$ of granularity $\Delta$. Then there exists $C > 0$ depending on the symbol $b$ but not depending on $\Delta$ such that \begin{equation}
		d_H\left(\spec T(b_\rho), \spec T(b_\rho^D)\right) \leq C \Delta^2
		\label{disc_spectrum_error}
	\end{equation} for all $\rho \in [\rho_l, \rho_h]$. 
\label{uniform_error_bound_lemma}
\end{lemma}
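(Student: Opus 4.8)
By Proposition \ref{expanded_polygon_prop} it suffices to find $C>0$, independent of $\Delta$ and of $\rho\in[\rho_l,\rho_h]$, such that $\norm{b_\rho - b_\rho^D}_\infty \leq C\Delta^2$. The plan is to exploit that $b_\rho^D$ is the piecewise-linear interpolant of the smooth loop $v\mapsto b_\rho(e^{iv})$ at the partition nodes $(v_j)$, so this is just the classical error bound for linear interpolation, $\norm{f - \ell}_\infty \leq \tfrac18 \Delta^2 \sup|f''|$, applied to the real and imaginary parts of $g_\rho(v) := b_\rho(e^{iv})$. Concretely, I would fix $j$ and $v\in[v_j,v_{j+1}]$, write $b_\rho^D(e^{iv})$ as the value at $v$ of the line through $(v_j, g_\rho(v_j))$ and $(v_{j+1}, g_\rho(v_{j+1}))$, and apply the standard interpolation remainder (via Taylor expansion with integral remainder, or Rolle's theorem on the error function) componentwise to get $\abs{g_\rho(v) - b_\rho^D(e^{iv})} \leq \tfrac18 (v_{j+1}-v_j)^2 \sup_{u}\abs{g_\rho''(u)} \leq \tfrac18 \Delta^2 \sup_{u}\abs{g_\rho''(u)}$.

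The remaining task is to bound $\sup_{v}\abs{g_\rho''(v)}$ uniformly in $\rho\in[\rho_l,\rho_h]$. Since $g_\rho(v) = \sum_{n=-r}^{s}\beta_n \rho^n e^{inv}$, differentiating twice in $v$ gives $g_\rho''(v) = -\sum_{n=-r}^{s} n^2\beta_n \rho^n e^{inv}$, hence $\abs{g_\rho''(v)} \leq \sum_{n=-r}^{s} n^2\abs{\beta_n}\rho^n$. The map $\rho\mapsto \sum_{n=-r}^{s} n^2\abs{\beta_n}\rho^n$ is continuous on the compact interval $[\rho_l,\rho_h]$, so it attains a finite maximum $M$ there. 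Setting $C := M/8$ then yields $\norm{b_\rho - b_\rho^D}_\infty \leq C\Delta^2$ for all $\rho\in[\rho_l,\rho_h]$, and combining with Proposition \ref{expanded_polygon_prop} gives \eqref{disc_spectrum_error}.

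There is no serious obstacle here; the only point requiring a little care is making the interpolation-remainder argument rigorous for the \emph{complex}-valued function $g_\rho$ — one should apply the real second-order Taylor remainder to $\operatorname{Re} g_\rho$ and $\operatorname{Im} g_\rho$ separately and then recombine, picking up at most a harmless constant factor (using $\abs{g_\rho''} \geq \abs{(\operatorname{Re} g_\rho)''}, \abs{(\operatorname{Im} g_\rho)''}$ one in fact keeps the constant $\tfrac18$). Everything else — that $g_\rho$ is $C^\infty$ because $b$ is a Laurent polynomial, and that the $\rho$-dependence of the second-derivative bound is controlled by a polynomial in $\rho$ maximized over a compact set — is routine.
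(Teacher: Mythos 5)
Your proposal is correct and follows essentially the same route as the paper's proof: reduce to bounding $\norm{b_\rho - b_\rho^D}_\infty$ via Proposition \ref{expanded_polygon_prop}, recognize $b_\rho^D$ as the piecewise-linear interpolant of $v\mapsto b_\rho(e^{iv})$ applied to real and imaginary parts separately, bound the error by $\Delta^2$ times second derivatives, and use compactness of $[\rho_l,\rho_h]\times[0,2\pi]$ for uniformity. The only difference is cosmetic: the paper re-derives the interpolation remainder ``by hand'' through two applications of the mean value theorem (arriving at $C\Delta^2$ with $C = \sup(|x_\rho''|+|y_\rho''|)$), whereas you cite the classical $\tfrac18\Delta^2\sup|f''|$ bound and give the explicit polynomial estimate $|g_\rho''(v)|\le\sum n^2|\beta_n|\rho^n$ — the same substance, a slightly sharper constant.
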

\begin{proof}
	When comparing $b_\rho^D$ with $b_\rho$, it is convenient to write \begin{equation*}
		b(\rho e^{iv}) = x_\rho (v) + i y_\rho(v).
	\end{equation*}  Further, we see that $x_\rho(v)$ and $y_\rho(v)$ are smooth in both $\rho$ and $v$. In particular this implies that $x_\rho''(v)$ and $y_\rho''(v)$ are continuous on $[\rho_l, \rho_h] \times [0, 2 \pi]$, so \begin{equation*}
	C := \sup_{\rho \in [\rho_l, \rho_h], v \in [0, 2\pi]} \abs{x_\rho''(v)} + \abs{y_\rho''(v)} < \infty.
\end{equation*} By the mean value theorem for $x_\rho$ and $y_\rho$ separately we have for $v \in (v_j, v_{j+1})$ \begin{equation*}
	 \frac{b(\rho e^{iv_{j+1}}) - b(\rho e^{iv_{j}})}{v_{j+1} - v_j} = x_\rho'(v_{jx}) + i {y}_\rho'(v_{jy}),
\end{equation*} for some $v_{jx}, v_{jy} \in (v_j, v_{j+1})$. Using \eqref{polygon_definition} and the mean value theorem repeatedly, we get for $v \in (v_j, v_{j+1})$ \begin{equation}
\begin{split}
	\abs{b_\rho^D(e^{iv}) - b(\rho e^{iv})} &  = \abs{(v - v_j) ({x}_\rho'(v_{jx}) + i {y}_\rho'(v_{jy})) + b(\rho e^{iv_j}) - b(\rho e^{iv})} \\
	& = \abs{(v - v_j) ({x}_\rho'(v_{jx}) + i {y}_\rho'(v_{jy})) - (v - v_j) ({x}_\rho'(v_{jx}^*) + i {y}_\rho'(v_{jy}^*))} \\
	& \leq \abs{v - v_j} \left(\abs{{x}_\rho'(v_{jx}) - {x}_\rho'(v_{jx}^*)} + \abs{{y}_\rho'(v_{jy}) - {y}_\rho'(v_{jy}^*)}\right), \\ 
	& \leq \Delta^2 \bigl(\abs{{x}_\rho''(v_{jx}^{**})} + \abs{{y}_\rho''(v_{jy}^{**})}\bigr) \\ 
	& \leq C \Delta^2,
\end{split}
\label{nom_bound}
\end{equation} where $v_{jx}^*, v_{jy}^* \in (v_j, v)$ and  $v_{jx}^{**}, v_{jy}^{**}$ between $v_{jx}$ and $v_{jx}^*$, and $v_{jy}$ and $v_{jy}^*$ respectively. Now we have $\norm{b_\rho - b_\rho^D}_\infty \leq C\Delta^2$, which using Proposition \ref{expanded_polygon_prop} gives the desired result. 
\end{proof}

With this result we are ready to prove a stronger version of Theorem \ref{riemann_sum_like}. \begin{theorem}
	Let $\rho_l$ and $\rho_h$ be bounds as in Theorem \ref{compact_rhos_theorem}. Then \begin{equation}
		\lim_{\Delta_\rho, \Delta_v \to 0} \Bigg[\sup_{\substack{\boldsymbol{\rho} \in \mathcal{P}(\Delta_\rho) \\ \boldsymbol{v} \in \mathcal{P}(\Delta_v)}}  d_H\Biggl(\bigcap_{j=0}^n \spec T(b_{\rho_j}^D), \Lambda(b) \Biggr) \Bigg]  = 0,
		\label{polygon_intersection}
	\end{equation} where $\boldsymbol{\rho} = (\rho_j)_{j=0}^{n} $,  $\mathcal{P}(\Delta_\rho) = \mathcal{P}_{[\rho_l, \rho_h]}(\Delta_\rho)$ and $\boldsymbol{v} = (v_j)_{j=0}^m$, $\mathcal{P}(\Delta_v) = \mathcal{P}_{[0, 2\pi]}(\Delta_v)$.
	\label{best_result}
\end{theorem}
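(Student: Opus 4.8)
The plan is to combine the two approximation results already proved: Theorem~\ref{riemann_sum_like}, which controls the error from using finitely many $\rho_j$'s, and Lemma~\ref{uniform_error_bound_lemma}, which controls the error from replacing each $\spec T(b_{\rho_j})$ by its polygon approximation $\spec T(b_{\rho_j}^D)$. The key point is that the latter error, $C\Delta_v^2$, is \emph{uniform} in $\rho \in [\rho_l, \rho_h]$, so it survives intersection over the partition. First I would observe that for any fixed partition $\boldsymbol{\rho} = (\rho_j)_{j=0}^n$ and any $\delta \geq C\Delta_v^2$, Lemma~\ref{uniform_error_bound_lemma} gives $\spec T(b_{\rho_j}) \subset \bigl(\spec T(b_{\rho_j}^D)\bigr)_\delta$ and $\spec T(b_{\rho_j}^D) \subset \bigl(\spec T(b_{\rho_j})\bigr)_\delta$ for every $j$. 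Taking intersections and using the elementary fact that $\bigcap_j (X_j)_\delta \supset \bigl(\bigcap_j X_j\bigr)_\delta$ together with $\bigcap_j (X_j)_\delta = \bigl(\bigcap_j X_j\bigr)$ being contained in each $(X_k)_\delta$... more carefully: from $X_j \subset (Y_j)_\delta$ for all $j$ we get $\bigcap_j X_j \subset \bigcap_j (Y_j)_\delta$, and one checks $\bigcap_j (Y_j)_\delta \subset \bigl(\bigcap_j Y_j\bigr)_\delta$ fails in general, so instead I would argue pointwise: if $\lambda \notin \bigl(\bigcap_j \spec T(b_{\rho_j}^D)\bigr)_\delta$ then $\lambda \notin \bigl(\spec T(b_{\rho_k}^D)\bigr)_\delta$ for some $k$, hence $\lambda \notin \spec T(b_{\rho_k})$ by the lemma, hence $\lambda \notin \bigcap_j \spec T(b_{\rho_j})$; and symmetrically. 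This yields
\begin{equation*}
	d_H\Biggl(\bigcap_{j=0}^n \spec T(b_{\rho_j}^D), \bigcap_{j=0}^n \spec T(b_{\rho_j})\Biggr) \leq C\Delta_v^2
\end{equation*}
for every choice of $\boldsymbol{\rho}$ and $\boldsymbol{v}$, i.e.\ a bound uniform over $\boldsymbol{\rho} \in \mathcal{P}(\Delta_\rho)$.

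Next I would invoke the triangle inequality for $d_H$: for each admissible pair $(\boldsymbol{\rho}, \boldsymbol{v})$,
\begin{equation*}
	d_H\Biggl(\bigcap_{j=0}^n \spec T(b_{\rho_j}^D), \Lambda(b)\Biggr) \leq C\Delta_v^2 + d_H\Biggl(\bigcap_{j=0}^n \spec T(b_{\rho_j}), \Lambda(b)\Biggr).
\end{equation*}
Taking the supremum over $\boldsymbol{\rho} \in \mathcal{P}(\Delta_\rho)$ and $\boldsymbol{v} \in \mathcal{P}(\Delta_v)$, the first term is bounded by $C\Delta_v^2 \to 0$ as $\Delta_v \to 0$, and the second term, which no longer involves $\boldsymbol{v}$, is exactly the quantity whose supremum over $\boldsymbol{\rho} \in \mathcal{P}(\Delta_\rho)$ tends to $0$ as $\Delta_\rho \to 0$ by Theorem~\ref{riemann_sum_like}. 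Hence the whole expression tends to $0$ as $\Delta_\rho, \Delta_v \to 0$, which is the claim.

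One technical point I would want to nail down: the statement requires all the sets in question to be nonempty bounded closed subsets of $\mathbb{C}$ so that $d_H$ is actually defined and the triangle inequality applies. Boundedness and closedness of $\spec T(b_{\rho_j}^D)$ follow since $b_\rho^D$ is a continuous symbol; nonemptiness of the finite intersections holds because $\Lambda(b)$ is nonempty and, by Proposition~\ref{compact_rhos_theorem} plus Lemma~\ref{uniform_error_bound_lemma}, is contained (up to the $C\Delta_v^2$ fattening) in every such intersection — or more directly one notes $b_{\rho}(\mathbb{T})$ and its winding-number fill always contain a fixed point near $\Lambda(b)$. I expect this bookkeeping, rather than any genuine difficulty, to be the only thing requiring care; the main structural obstacle — propagating the polygon error through an arbitrary intersection — is already resolved by the uniformity in $\rho$ built into Lemma~\ref{uniform_error_bound_lemma}, so the proof is essentially a two-line triangle-inequality argument once that uniformity is in hand.
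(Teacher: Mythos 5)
Your high-level decomposition matches the paper's exactly: split the error via the triangle inequality for $d_H$, handle the $\rho$-partition term with Theorem~\ref{riemann_sum_like} and the polygonalization term with Lemma~\ref{uniform_error_bound_lemma}, using that the latter bound is uniform over $[\rho_l, \rho_h]$. That part is the paper's argument.

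But look again at the step you yourself flagged. You note correctly that $\bigcap_j (Y_j)_\delta \subset \bigl(\bigcap_j Y_j\bigr)_\delta$ fails for general sets, and say you will ``argue pointwise'' instead. Your pointwise step, however, reads: if $\lambda \notin \bigl(\bigcap_j X_j\bigr)_\delta$ then $\lambda \notin (X_k)_\delta$ for some $k$. Taking the contrapositive, this is exactly the inclusion $\bigcap_k (X_k)_\delta \subset \bigl(\bigcap_j X_j\bigr)_\delta$ --- the very statement you just said fails, with different letters. So the pointwise reformulation is not a fix; it is a renaming. The abstract fact you need, $d_H\bigl(\bigcap_j X_j, \bigcap_j Y_j\bigr) \leq \max_j d_H(X_j, Y_j)$, is genuinely false for arbitrary compact sets: take $X_1 = \{1, 100\}$, $X_2 = \{-1, 100\}$, $Y_1 = Y_2 = \{0, 100\}$; each $d_H(X_j, Y_j) = 1$, yet $\bigcap_j X_j = \{100\}$ and $\bigcap_j Y_j = \{0, 100\}$, so $d_H\bigl(\bigcap X_j, \bigcap Y_j\bigr) = 100$. (One can build connected counterexamples too, e.g.\ with overlapping $L$-shapes.) To be fair, the paper's own proof is no more careful here --- it writes $\bigl(\bigcap_j \spec T(b_{\rho_j}^D)\bigr)_{\epsilon/2} = \bigcap_j \bigl(\spec T(b_{\rho_j}^D)\bigr)_{\epsilon/2}$ as an equality without comment, and it is the $\supset$ direction of that equality that is the load-bearing, non-obvious one. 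So you have reproduced the paper's argument rather than departed from it, and you even spotted a subtlety the paper glosses over; but having spotted it, you should either justify the interchange for the specific sets at hand (e.g.\ by exploiting the explicit boundary correspondence between $\spec T(b_\rho)$ and $\spec T(b_\rho^D)$ constructed in the proof of Proposition~\ref{expanded_polygon_prop}, not merely the abstract $d_H$ bound), or acknowledge that the abstract triangle-inequality route is incomplete as stated.
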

\begin{proof}
	Fix an $\epsilon > 0$. We can use the triangle inequality for $d_H$ to estimate \begin{equation}
		\begin{multlined}
			d_H \Biggl(\,\bigcap_{j=0}^n \spec T(b_{\rho_j}^D), \Lambda(b)\Biggr) \leq d_H \Biggl(\,\bigcap_{j=0}^n \spec T(b_{\rho_j}^D), \bigcap_{j=0}^n \spec T(b_{\rho_j})\Biggr) \\ + d_H \Biggl(\,\bigcap_{j=0}^n \spec T(b_{\rho_j}), \Lambda(b)\Biggr).	
		\end{multlined}
	\label{discrete_error}
	\end{equation} Let $\delta_\rho$ be such that $d_H \left(\cap_{j=0}^n \spec T(b_{\rho_j}), \Lambda(b)\right) < \epsilon/2$ for all partitions of $[\rho_l, \rho_h]$ with granularity less than $\delta_\rho$, which exists by Theorem \ref{riemann_sum_like}. Let $C > 0$ be as in Lemma \ref{uniform_error_bound_lemma} and choose $\delta_v$ small enough so that $d_H\left(\spec T(b_\rho), \spec T(b_\rho^D)\right) \leq C \delta_v^2 < \epsilon/2$ for all partitions of $[0, 2\pi]$ with granularity less than $\delta_v$ and all $\rho \in [\rho_l, \rho_h]$. Using this we can prove that $d_H \bigl(\cap_{j=0}^n \spec T(b_{\rho_j}^D), \cap_{j=0}^n \spec T(b_{\rho_j})\bigr) \leq \epsilon/2$, by noting that \begin{equation*}
		\begin{split}
			\Biggl(\,\bigcap_{j=0}^n \spec T(b_{\rho_j}^D)\Biggr)_{\epsilon/2} = \bigcap_{j=0}^n \left( \spec T(b_{\rho_j}^D)\right)_{\epsilon/2} & \supset  \bigcap_{j=0}^n  \spec T(b_{\rho_j}), \\
			\Biggl(\,\bigcap_{j=0}^n \spec T(b_{\rho_j})\Biggr)_{\epsilon/2} = \bigcap_{j=0}^n \left( \spec T(b_{\rho_j})\right)_{\epsilon/2} & \supset  \bigcap_{j=0}^n  \spec T(b_{\rho_j}^D).
		\end{split} 
	\end{equation*} This concludes our proof since we now have $d_H \bigl(\cap_{j=0}^n \spec T(b_{\rho_j}^D), \Lambda(b)\bigr) \leq \epsilon/2 + \epsilon/2 = \epsilon$.  
\end{proof}

\begin{remark}
The limit in \eqref{polygon_intersection} can actually be taken in any order, i.e. \begin{equation}
\lim_{\Delta_\rho, \Delta_v \to 0} \bigcap_{j=0}^n \spec T(b_{\rho_j}^D) = \lim_{\Delta_v \to 0} \lim_{\Delta_\rho \to 0} \bigcap_{j=0}^n \spec T(b_{\rho_j}^D) =  \lim_{\Delta_\rho \to 0} \lim_{\Delta_v \to 0} \bigcap_{j=0}^n \spec T(b_{\rho_j}^D) = \Lambda(b),
\label{limit_swap}
\end{equation} where the limits are to be understood as in \eqref{polygon_intersection}. This property is not the case for many infinite-dimensional spectral problems. The equalities in \eqref{limit_swap} can be seen by using the continuity of $\rho \to \spec T(b_\rho^D)$ and \eqref{disc_spectrum_error}.

\end{remark}

Thanks to Theorem \ref{best_result} we have the outline of an algorithm that we know will converge to $\Lambda(b)$. Given $b$ we sample $\rho$'s in $[\rho_l, \rho_h]$, and for each sampled $\rho$ construct an approximating polygon according to \eqref{polygon_definition}. Then, taking the intersection with respect to non-zero winding number of all the sampled polygons yields an approximating polygon for $\Lambda(b)$. By Theorem \ref{best_result} we know that the intersection of the polygons will approach $\Lambda(b)$ as we decrease the granularities. Pseudocode for the algorithm can be seen in Algorithm \ref{first_approach_alg} below.

\begin{algorithm}
	\caption{Basic approach to calculating $\Lambda(b)$ geometrically}\label{first_approach_alg}
	\begin{algorithmic}[5]
		\Procedure{CalcLimitSet}{$b, n, m$} \\ $b$: the symbol,\\ $n$: the number of sampled $\rho$'s, \\ $m$: number of sampled $v$'s.
		\State $\rho_l, \rho_h \gets$ bounds from Proposition \ref{compact_rhos_theorem}
		\State rhos $\gets$ sample $n+1$ $\rho$'s in $[\rho_l, \rho_h]$
		\State vs $\gets$ sample $m+1$ $v$'s in $[0, 2\pi]$ 
		\State $\Lambda \gets$ $b_{\text{rhos[1]}}^D(\text{vs})$
		\For{$i \gets 2$ to $n+1$}
			\State $\Lambda \gets \Lambda \cap b_{\text{rhos}[i]}^D(\text{vs})$
		\EndFor
		\State \textbf{return} $\Lambda$
		\EndProcedure
	\end{algorithmic}
\end{algorithm}

\begin{remark}
	It would be useful to quantify the error introduced by the polygon approximation. In fact, we see from the proof of Theorem \ref{best_result} that there exists $C > 0$ such that \begin{equation}
		d_H \Biggl(\,\bigcap_{j=0}^n \spec T(b_{\rho_j}^D), \Lambda(b)\Biggr) \leq C\Delta_v^2 +  d_H \Biggl(\,\bigcap_{j=0}^n \spec T(b_{\rho_j}), \Lambda(b)\Biggr).
		\label{error_separation}
	\end{equation} In other words, the polygon approximation gives a term that converges very fast to 0, so the main error to worry about is the second term on the right hand side of \eqref{error_separation}.
\label{different_errors_remark} 
\end{remark}

Now we have a way of producing an approximating polygon, but from just looking at the approximating polygon we do not get a lot of information about the size of the error, but this is likely something that is very useful to know in practical applications. 
For example, we would not be able to say for certain if the limit set lies in a specific region or not, since we do not know the size of the error. However, we could try to resolve this issue by using the following idea: if we slightly expand every polygon in \eqref{polygon_intersection} by an amount $\delta_\rho$, such that $(\spec T(b_{\rho_j}^D))_{\delta_{\rho_j}} \supset \spec T(b_{\rho_j})$, then the intersection of the expanded polygons would contain $\Lambda(b)$. The intersection of the expanded polygons would then yield an \textit{approximating superset} of $\Lambda(b)$, which we know that $\Lambda(b)$ is a subset of. And so if the approximating superset does not intersect a specific region, we know for certain that $\Lambda(b)$ does not do that either. 



\begin{corollary}
	Let $\rho_l$ and $\rho_h$ be bounds as in Theorem \ref{compact_rhos_theorem}, and let $(\rho_j)_{j=0}^{n}$ denote any partition of $[\rho_l, \rho_h]$ with granularity $\Delta_\rho$. Then there exists a family of constants $C_\rho$, $\rho \in [\rho_l, \rho_h]$ such that for all $\Delta_v > 0$ and $(v_j)_{j=0}^{m} \in \mathcal{P}_{[0, 2 \pi]}(\Delta_v)$ we have \begin{equation*}
		\begin{split}
				\bigcap_{j=0}^n \left( \spec T(b_{\rho_j}^D)\right)_{C_{\rho_j}\Delta_v^2} & \supset \Lambda(b).
		\end{split} 
	\end{equation*}  Further, we have that \begin{equation*}
		\lim_{\Delta_\rho, \Delta_v \to 0} \Bigg[\sup_{\substack{\boldsymbol{\rho} \in \mathcal{P}(\Delta_\rho) \\ \boldsymbol{v} \in \mathcal{P}(\Delta_v)}}  d_H\Biggl(\bigcap_{j=0}^n\left( \spec T(b_{\rho_j}^D)\right)_{C_{\rho_j}\Delta_v^2}, \Lambda(b) \Biggr) \Bigg]  = 0,
	\end{equation*}
	where $\boldsymbol{\rho} = (\rho_j)_{j=0}^{n} $,  $\mathcal{P}(\Delta_\rho) = \mathcal{P}_{[\rho_l, \rho_h]}(\Delta_\rho)$ and $\boldsymbol{v} = (v_j)_{j=0}^m$, $\mathcal{P}(\Delta_v) = \mathcal{P}_{[0, 2\pi]}(\Delta_v)$.
	\label{best_bound_result}
\end{corollary}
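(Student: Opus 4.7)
The plan is to establish the two claims separately: first the containment $\bigcap_{j=0}^n (\spec T(b_{\rho_j}^D))_{C_{\rho_j} \Delta_v^2} \supset \Lambda(b)$, and then the Hausdorff convergence by sandwiching the set between $\Lambda(b)$ on one side and a small fattening of $\Lambda(b)$ on the other. For both parts the key inputs are Lemma \ref{uniform_error_bound_lemma}, Lemma \ref{spec_continuous}, and Theorem \ref{compact_rhos_theorem}.

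For the first part, I would take $C_\rho$ to be the uniform constant $C$ supplied by Lemma \ref{uniform_error_bound_lemma} (any pointwise-smaller choice would also work), so that $\spec T(b_\rho) \subset (\spec T(b_\rho^D))_{C_\rho \Delta_v^2}$ for every $\rho \in [\rho_l, \rho_h]$. Combined with $\Lambda(b) \subset \spec T(b_\rho)$ from \eqref{limit_set_two_classification}, this gives $\Lambda(b) \subset (\spec T(b_{\rho_j}^D))_{C_{\rho_j} \Delta_v^2}$ for each $j$, and intersecting over $j$ finishes the claim.

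For the convergence, since $\Lambda(b)$ already lies inside the intersection by the first part, only the other direction of $d_H$ is nontrivial. Using Lemma \ref{uniform_error_bound_lemma} in the opposite direction gives $\spec T(b_{\rho_j}^D) \subset (\spec T(b_{\rho_j}))_{C \Delta_v^2}$, which yields $(\spec T(b_{\rho_j}^D))_{C_{\rho_j} \Delta_v^2} \subset (\spec T(b_{\rho_j}))_{2C \Delta_v^2}$ and hence
$$\bigcap_{j=0}^n (\spec T(b_{\rho_j}^D))_{C_{\rho_j} \Delta_v^2} \subset \bigcap_{j=0}^n (\spec T(b_{\rho_j}))_{2C \Delta_v^2}.$$
Next, Lemma \ref{spec_continuous} together with the compactness of $[\rho_l, \rho_h]$ supplies a modulus of continuity $\eta(\Delta_\rho) \to 0$ such that every $\rho \in [\rho_l, \rho_h]$ is within $d_H$-distance $\eta(\Delta_\rho)$ of some partition point $\rho_j$. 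Applying this to each $\rho$ gives
$$\bigcap_{j=0}^n (\spec T(b_{\rho_j}))_{2C \Delta_v^2} \subset \bigcap_{\rho \in [\rho_l, \rho_h]} (\spec T(b_\rho))_{\eta(\Delta_\rho) + 2 C \Delta_v^2}.$$

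The final step is to argue that $K_\epsilon := \bigcap_{\rho \in [\rho_l, \rho_h]} (\spec T(b_\rho))_\epsilon$ converges to $\bigcap_{\rho} \spec T(b_\rho) = \Lambda(b)$ in $d_H$ as $\epsilon \to 0^+$. This follows from a standard compactness argument: the $K_\epsilon$ form a nested decreasing family of nonempty compact sets whose full intersection is $\Lambda(b)$, and any sequence $\lambda_n \in K_{\epsilon_n}$ with $\epsilon_n \to 0$ has every subsequential limit in $\Lambda(b)$, ruling out $d_H(K_{\epsilon_n}, \Lambda(b)) \not\to 0$. I expect the main obstacle to be exactly this last passage, because the naive bound $\bigcap_j (A_j)_\epsilon \subset (\bigcap_j A_j)_\epsilon$ is false in general; the uniform continuity of $\rho \mapsto \spec T(b_\rho)$ and the compactness of $[\rho_l, \rho_h]$ are precisely what bridge the gap, letting us absorb the discretization fattening into a slightly larger fattening of the continuous intersection before taking $\epsilon \to 0$.
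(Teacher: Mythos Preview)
Your proof is correct and takes a genuinely different route from the paper's. The paper defines $C_\rho = \|x_\rho''\|_\infty + \|y_\rho''\|_\infty + \eta$ and handles the inclusion the same way you do. For the convergence, however, the paper argues via the triangle inequality: it asserts the inclusion
\[
\bigcap_{j=0}^n \bigl(\spec T(b_{\rho_j}^D)\bigr)_{C_{\rho_j}\Delta_v^2} \subset \Bigl(\,\bigcap_{j=0}^n \spec T(b_{\rho_j}^D)\Bigr)_{C\Delta_v^2}
\]
with $C=\sup_\rho C_\rho$, and then invokes Theorem~\ref{best_result} to conclude. This is precisely the step you correctly flag as delicate, since $\bigcap_j (A_j)_\epsilon \subset \bigl(\bigcap_j A_j\bigr)_\epsilon$ fails in general; the paper does not supply an argument for why it holds here. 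Your route bypasses this issue entirely: you push the fattening outward to $\bigcap_{\rho\in[\rho_l,\rho_h]} (\spec T(b_\rho))_\epsilon$ via uniform continuity of $\rho\mapsto\spec T(b_\rho)$, and then use the monotone-compact-intersection argument to get $K_\epsilon \to \Lambda(b)$. That last step is the one genuinely new idea in your proof relative to the paper, and it is exactly what is needed to avoid the unjustified inclusion. Your approach is thus more self-contained (it does not lean on Theorem~\ref{best_result}) and more careful about the fattening-of-intersections subtlety; the paper's approach is shorter on the page but leans on a step that, as written, needs additional justification.
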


\begin{proof}
	Let $C_\rho := \norm{{x}_\rho''(v)}_\infty + \norm{{y}_\rho''(v)}_\infty + \eta$ for arbitrary $\eta \in (0, 1)$. From Proposition \ref{expanded_polygon_prop} and \eqref{nom_bound}, we see that $$\left(\spec T(b_{\rho_j}^D)\right)_{C_{\rho_j} \Delta_v^2} \supset \spec T(b_{\rho_j}),$$ which shows the first statement about the inclusion. 
	
	For the second statement, let $C := \sup_{\rho \in [\rho_l, \rho_h]} C_\rho$ and note that $$\bigcap_{j=0}^n \left( \spec T(b_{\rho_j}^D)\right)_{C_{\rho_j}\Delta_v^2} \subset \Biggl(\,\bigcap_{j=0}^n \spec T(b_{\rho_j}^D)\Biggr)_{C \Delta_v^2}. $$ Now let $\Delta_\rho$ and $\Delta_v$ be such that $d_H\bigl(\cap_{j=0}^n \spec T(b_{\rho_j}^D), \Lambda(b)\bigr) < \epsilon/2$ for partitions of granularities less than $\Delta_\rho$ and $\Delta_v$, respectively. Further, suppose $\Delta_v$ fulfills $C \Delta_v^2 < \epsilon/2$. Then $$d_H\Biggl(\,\bigcap_{j=0}^n \spec T(b_{\rho_j}^D), \biggl(\,\bigcap_{j=0}^n \spec T(b_{\rho_j}^D)\biggr)_{C_{}\Delta_v^2}\,\Biggr) < \epsilon/2, $$ and the triangle inequality for $d_H$ now gives what we want. 
\end{proof}

\begin{remark}
	Using Corollary \ref{best_bound_result} we can easily modify Algorithm \ref{first_approach_alg} to get a bound for $\Lambda(b)$. Simply use the constants given in the proof of Corollary \ref{best_bound_result}, expand the polygons we intersect with by the given amount, and compute the intersection in the same way as before. 
	\label{offset_bound_remark}
\end{remark}

We now have a way to produce an approximating superset of $\Lambda(b)$ and from the previous algebraic approach presented in Section \ref{previous_work_subsection}, we have a way of computing an approximating subset of $\Lambda(b)$ in the sense that the set of points produced by the algebraic algorithm also will be a subset of $\Lambda(b)$ and converge to $\Lambda(b)$ in the Hausdorff metric. Let $\Lambda^{sup}$ denote the approximating superset, and let $\Lambda_{sub}$ denote the approximating subset. We have $\Lambda_{sub} \subset \Lambda(b) \subset \Lambda^{sup}$, and from the definition of $d_H$ we easily see that $d_H(\Lambda_{sub}, \Lambda(b)) \leq d_H(\Lambda_{sub}, \Lambda^{sup})$ and $d_H(\Lambda^{sup}, \Lambda(b)) \leq d_H(\Lambda_{sub}, \Lambda^{sup})$. So this means that we have full explicit error control in the Hausdorff metric. We summarize this argument in Algorithm \ref{error_control} below.

\begin{algorithm}
	\caption{Compute an upper bound on $d_H(\Lambda^{sup}, \Lambda(b))$ and $d_H(\Lambda_{sub}, \Lambda(b))$}\label{error_control}
	\begin{algorithmic}[5]
		\Procedure{CalcErrorBound}{$b, n, m$} \\ $b$: the symbol,\\ $n$: the number of sampled $\rho$'s, \\ $m$: number of sampled $v$'s.
		\State $\Lambda^{sup} \gets$ compute approximating superset
		\State $\Lambda_{sub} \gets$ compute approximating subset using Section \ref{previous_work_subsection}
		\State $d\gets d_H(\Lambda_{sub}, \Lambda^{sup})$ 
		\State \textbf{return} $d$
		\EndProcedure
	\end{algorithmic}
\end{algorithm}

\begin{remark}
	When constructing the approximating superset, it is possible that we lose interior regions of the approximating polygon that have zero winding number (i.e., what appears from the approximating polygon to be loops of $\Lambda(b)$), this is just a matter of choosing too large $\Delta_v$ or too large $C_\rho$'s. However, Algorithm \ref{error_control} gives us a bound on how large holes of $\Lambda(b)$ the approximating superset can cover.
\end{remark}

\section{Implementation details}
\label{implementation_details_section}
The main operation we need in Algorithm \ref{first_approach_alg} is polygon intersection, which is a common problem in the field of computer graphics. Hence, it has been thoroughly researched, and there exist efficient implementations. For this work, we use the Python library pyclipper \cite{pyclipper}, which is a Python wrapper library for the C$++$ library Clipper2 \cite{clipper2}. The library is based on Vatti's clipping algorithm, which is described in his article \cite{vatti_clipping}. The processing time for intersecting two polygons using Vatti's algorithm scales linearly in the number of vertices of the polygons being intersected. 

To produce the bound for $\Lambda(b)$ as described in Remark \ref{offset_bound_remark}, we also need to be able to expand polygons. This problem is called polygon offsetting in the literature, and it is a well researched problem. In fact, Clipper2 contains an implementation of the offsetting algorithm as described in \cite{offsetting_article}. It should be noted that we do not want to compute $(\spec T(b_\rho^D))_{C_\rho \Delta_v^2}$ exactly, since the boundary then contains circular arcs, which are more difficult to intersect with than polygons. Instead, we bound $(\spec T(b_\rho^D))_{C_\rho \Delta_v^2}$ with a polygon ${P}_\rho$ such that \begin{equation}
	\left((\spec T(b_\rho^D)\right)_{C_\rho \Delta_v^2} \subset {P}_\rho \subset \left(\spec T(b_\rho^D)\right)_{2 C_\rho \Delta_v^2}
	\label{polygon_bound}
\end{equation} The constant 2 here is chosen quite arbitrarily, and any constant greater than one would work in theory. For practical reasons we want the chosen constant to be ``large enough'' so that $P_\rho$ does not contain too many vertices, and ``small enough'' so that $P_\rho$ is close to $\Lambda(b)$. Because of the way $C_\rho$ is chosen in the proof of Corollary \ref{best_bound_result}, the conclusions hold also for $2C_\rho$, and because of \eqref{polygon_bound}, they then also hold for ${P}_\rho$. Clipper2 contains an implementation for choosing such ${P}_\rho$. The time complexity for these offset polygons is a bit tricky to analyze, as the offsetting algorithm has time complexity $O((m+k)\log m)$ where $m$ is the number of vertices of the input polygon and $k$ the number of self-intersections in the raw offset curve \cite{offsetting_article}. Intuition tells us that for sufficiently regular polygons, such as the ones we work with, $k$ should be of the same order as $m$, so in the following analysis of time complexity we assume that producing ${P}_\rho$ takes $O(m \log m)$ time and that the number of vertices for ${P}_\rho$ is $O(m)$. 

Using the above implementations of offsetting and polygon intersection we can compute the approximating polygon and approximating superset of $\Lambda(b)$ given by Algorithm \ref{first_approach_alg}. We do this for the symbol presented in Example \ref{main_example}. The results can be seen in Figure \ref{first_alg_all_results}. If we look at the bigger picture as in Figure \ref{first_alg_est}, we see that the results are very similar to those of the algebraic approach. And if we zoom in, we see that our geometric approach struggles at the origin, which also is true for the algebraic algorithm if one zooms even further, but it can't be clearly seen in the figures. 
It took roughly 3 minutes to run the algorithm for finding the approximating polygon and the approximating superset.

\begin{figure}
	\centering
	\begin{subfigure}[t]{0.49\textwidth}
		\centering
		\includegraphics[width=\textwidth]{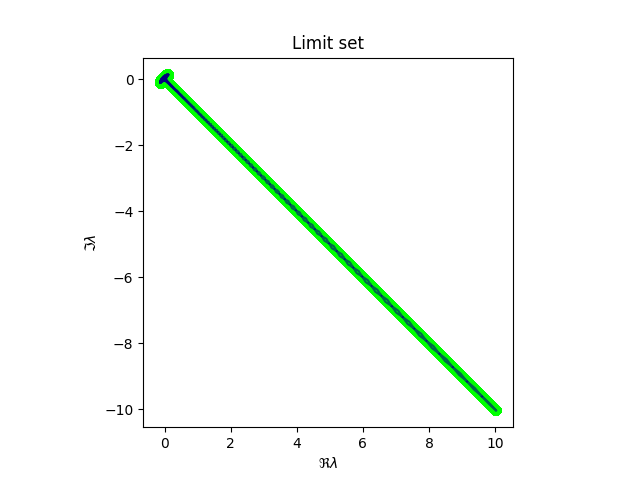}
		\caption{An approximating polygon for $\Lambda(b)$ computed using Algorithm \ref{first_approach_alg} with $n = 1250$ and $m = 1000$.}
		\label{first_alg_est}
	\end{subfigure}
	\hfill
	\begin{subfigure}[t]{0.49\textwidth}
		\centering
		\includegraphics[width=\textwidth]{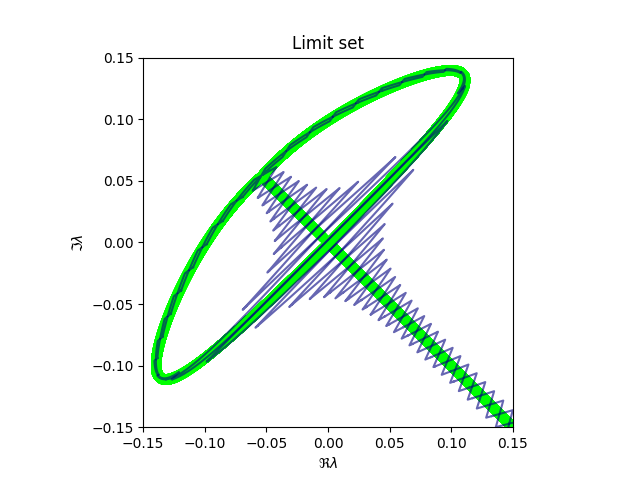}
		\caption{A zoomed in version of \ref{first_alg_est}.}
		\label{first_alg_est_zoom}
	\end{subfigure}
	
	\begin{subfigure}[t]{0.49\textwidth}
		\centering
		\includegraphics[width=\textwidth]{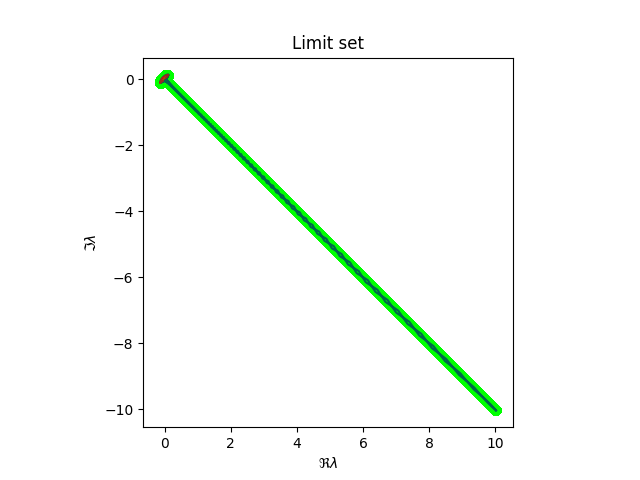}
		\caption{An approximating superset of $\Lambda(b)$ computed using Algorithm \ref{first_approach_alg} with $n = 1250$ and $m = 1000$.}
		\label{first_bound_est}
	\end{subfigure}
	\hfill
	\begin{subfigure}[t]{0.49\textwidth}
		\centering
		\includegraphics[width=\textwidth]{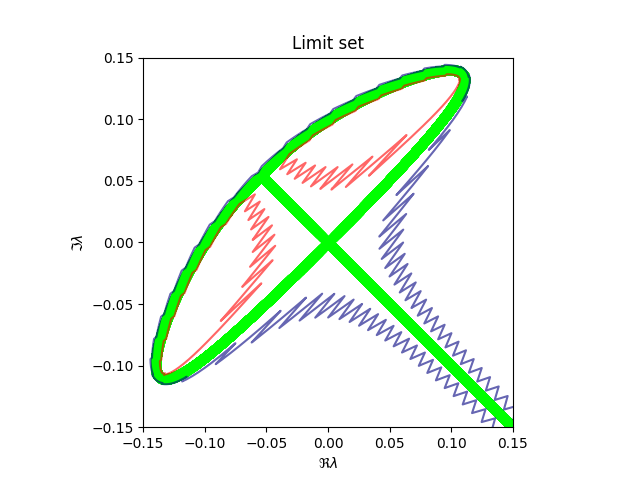}
		\caption{A zoomed in version of \ref{first_bound_est}. Blue polygons are positively oriented and the red ones are negatively oriented. That is, the approximating superset is the region outside the red polygons and inside the blue polygon.}
		\label{first_alg_bound_zoom}
	\end{subfigure}

	\caption{A comparison between Algorithm \ref{first_approach_alg} and the algebraic approach presented in \ref{previous_work_subsection}. The symbol $b$ being investigated is presented in \eqref{main_example_eq}. The lime dots in all of the subfigures are the results from the algebraic approach, these are the same results as presented in Figure \ref{algebraic_alg_all_results}.}
	\label{first_alg_all_results}
\end{figure}


Besides finding efficient polygon intersection and offsetting algorithms, the most important issue in the implementation is how to choose $(v_j)_{j=0}^{m}$ and $(\rho_j)_{j=0}^{n}$ (i.e., where to choose the vertices of $b_{\rho_j}^D$ on $b_{\rho_j}(\mathbb{T})$ and what $\rho$'s to choose for the intersections). We know from Theorem \ref{best_result} that if we just choose the granularities to be finer and finer, then the intersection will approach $\Lambda(b)$. But we should be able to get a more computationally efficient algorithm if the partitions are chosen in a good way. 

For simplicity, the only investigated partition for $v$ is the uniform one, that is, $v_j = 2\pi j/m$. It is quite difficult to imagine some strategy working much better. Choosing $\rho$'s on the other hand is more interesting. A natural approach is to first calculate $[\rho_l, \rho_h]$ using the estimate from Proposition \ref{compact_rhos_theorem}. Then, a naive solution is to sample $\rho$'s uniformly in this interval.
There are two major points of critique for this method of sampling $\rho$'s. First of all, the bounds $[\rho_l, \rho_h]$ are not strict. For the $\rho$'s that are close to the end points, $\spec T(b_\rho)$ probably contains the entirety of the currently intersected polygon. Secondly, from the proof of Theorem \ref{riemann_sum_like} we see that ideally, we want to choose more $\rho$'s where $d_H(\spec T(b_\rho), \spec T(b_{\tilde{\rho}}))$ grows rapidly when moving $\tilde{\rho}$ away from $\rho$.

To get an intuition of good heuristics for choosing stricter bounds for $\rho$, it is helpful to plot $b_\rho(\mathbb{T})$ and let $\rho$ vary. An insight from experimenting is that $\Lambda(b)$ is formed by the points where $b_\rho(\mathbb{T})$ intersects itself, and when $\rho$ is varied, this intersection moves along $\Lambda(b)$ in a continuous manner. This would suggest that we only want to find the intervals for which $b_\rho(\mathbb{T})$ self-intersect, and where these intersections are a part of $\Lambda(b)$. As these intersections move continuously with $\rho$, there should be a finite number of interesting intervals for $\rho$. Actually, one can see from \eqref{graph_intersect_property} that all $\lambda \in \Lambda(b)$, except for a finite number, must arise from a self-intersection of $b_\rho(\mathbb{T})$, since \eqref{graph_intersect_property} exactly describes this type of intersection.

To find the interesting intervals of $\rho$'s analytically is not obvious. But we can use the fact that when we intersect our partial polygon with $\spec T(b_\rho)$ for a good $\rho$, the area of the partial polygon decreases. So we could sweep over $[\rho_l, \rho_h]$ and calculate the decrease in area when intersecting with different $\spec T(b_\rho)$. Also, note that the area of a polygon can be calculated in linear time with respect to the number of vertices. Pseudocode for calculating the approximating polygon and approximating superset of $\Lambda(b)$ with the sweep improvement can be seen in Algorithm \ref{area_sweep_alg}. How to choose the intervals of $\rho$'s where the area is reduced the most deserves some attention. If some $\rho$ is in both \code{rhos} and \code{sweeprhos} in Algorithm \ref{area_sweep_alg}, the area reduced will be 0. To avoid missing out on good $\rho$'s because of this, a moving average filter is used for \code{areareduce}, and then all the good intervals are selected as the ones where \code{areareduce}[$\rho$] $\geq (\max_\rho\text{\code{areareduce}}[\rho])/10^6$. 

The problem of sampling the same $\rho$ also applies when we sample \code{rhos} for the next intersection-run. Typically, we will find the exact same interval of good $\rho$'s that reduce area the most in each iteration. Therefore, a randomly chosen $\epsilon$ with $0 \leq \epsilon \leq \text{sweeprhos}[i+1] - \text{sweeprhos}[i]$ is added to the left boundaries of the good intervals and subtracted from the right boundaries. 

A significant optimization can be done when we calculate \code{areareduce}. The important observation is that \code{areareduce}$[\rho]$ is non-increasing. When updating \code{areareduce} we begin by calculating $\Lambda_{s}$ for the intervals of $\rho$ that we just intersected with. Then, if \code{areareduce} for the $\rho$ that did not reduce area the most in the previous iteration still are smaller than the threshold, we don't need to update those values. Practically, this drastically reduces the number of intersections done in total. Most polygon intersections used for calculating $\Lambda_{s}$ are done in the first iteration, when we have no prior values in \code{areareduce}.

The second major point of critique to the naive method means that we ideally want to sample $\rho$'s denser where $d_H(\spec T(b_\rho), \spec T(b_{\tilde{\rho}}))$ grows rapidly when moving $\tilde{\rho}$ away from $\rho$. We tried a strategy of doing this. The idea behind our strategy is the observation that $b$ can be seen as a sum of two polynomials, one in $z$ and one in $1/z$. The standard approach if nothing better is known is to sample uniformly, but because of $1/z$, it would make sense to sample $\rho_j$ uniformly for $\rho \geq 1$ and sampling $\rho < 1$ in such a way that $1/\rho_j$ becomes uniformly distributed. 

We note that besides the above mentioned improvements, Algorithm \ref{area_sweep_alg} also contains the details on how to produce the approximating superset, which is denoted $\Lambda_b$ in the pseudocode. From the proof of Corollary \ref{best_bound_result}, we see that we should put  $C_\rho = \sup_{v \in [0, 2\pi]}  \norm{{x}_\rho''(v)}_\infty + \norm{{y}_\rho''(v)}_\infty + \eta$. And reasonably, to get an efficient algorithm in practice, we should scale $\eta$ with $\sup_{v \in [0, 2\pi]}  \norm{{x}_\rho''(v)}_\infty + \norm{{y}_\rho''(v)}_\infty$. So we chose $\eta := 0.2 \cdot \sup_{v \in [0, 2\pi]}  \norm{{x}_\rho''(v)}_\infty + \norm{{y}_\rho''(v)}_\infty$, since we need something positive, but small enough to not worsen the result significantly. This is why we have the constant 1.2 in Algorithm \ref{area_sweep_alg}. 


\begin{algorithm}
	\caption{Improved $\Lambda(b)$--calculator using area sweeps}\label{area_sweep_alg}
	\begin{algorithmic}[5]
		\Procedure{CalcLimitSet}{$b, n, m, l, \#sweeps$} \\ $b$: the symbol,\\ $n$: the number of sampled $\rho$'s, \\ $m$: number of sampled $v$'s, \\ $l$: the number of sampled $\rho$'s for area sweeping, \\ $\#sweeps$: how many times new $\rho$'s should be generated.  
		\State $\rho_l, \rho_h \gets$ bounds from Theorem \ref{compact_rhos_theorem}
		\State sweeprhos $\gets$ sample $l$ points for area sweeping in $[\rho_l, \rho_h]$.
		\State rhos $\gets$ sample $n/\#sweeps$ points in $[\rho_l, \rho_h]$
		\State vs $\gets$ sample $m+1$ points in $[0, 2\pi]$ 
		\State $\Delta_v \gets \max_{1 \leq i \leq m} (\text{vs}[i+1] - \text{vs}[i])$
		\State $\rho \gets \text{rhos}[1]$
		\State $\Lambda \gets$ $b_{\rho}^D(\text{vs})$
		\State $C_\rho \gets 1.2 \cdot \sup_{v \in [0, 2\pi]}  (\norm{{x}_\rho''(v)}_\infty + \norm{{y}_\rho''(v)}_\infty)$
		\State $\Lambda_b \gets$ $(b_{\rho}^D(\text{vs}))_{C_\rho \Delta_v^2}$
		\For{$i \gets 1$ to $\#sweeps$}
			\For{each $\rho$ in rhos}
			\State $C_\rho \gets 1.2 \cdot \sup_{v \in [0, 2\pi]}  (\norm{{x}_\rho''(v)}_\infty + \norm{{y}_\rho''(v)}_\infty)$ 
			\State $\Lambda \gets \Lambda \cap b_{\rho}^D(\text{vs})$
			\State $\Lambda_b \gets \Lambda_b \cap (b_{\rho}^D(\text{vs}))_{C_\rho \Delta_v^2}$
			\EndFor
			\For{each $\rho$ in sweeprhos}
			\State $\Lambda_{s} \gets \Lambda \cap b_{\rho}^D(\text{vs})$
			\State areareduce[$\rho$] $\gets$ Area($\Lambda$) - Area($\Lambda_{s}$)
			\EndFor
			\State rhos $\gets$ sample $n/\#sweeps$ points in the intervals that reduces area the most
		\EndFor
		\State \textbf{return} $\Lambda, \Lambda_b$
		\EndProcedure
	\end{algorithmic}
\end{algorithm}

The difficult step of Algorithm \ref{error_control} is to compute the Hausdorff distance $d_H(\Lambda_{sub}, \Lambda^{sup})$. Since $\Lambda_{sub} \subset \Lambda^{sup}$, we want to find the smallest $r$ such that $\left(\Lambda_{sub}\right)_r \supset \Lambda^{sup}$. The idea behind our approach to do this is to binary search over $r$. We form regular $n$-gons of radius $r$ around each point in $\Lambda_{sub}$ and then compute the union of all $n$-gons. Then we intersect the union with $\Lambda^{sup}$, and check if the intersection is $\Lambda^{sup}$. If it is, it means that we can choose $r$ smaller, and if the intersection is not $\Lambda^{sup}$, it means that we must choose $r$ larger. Because of the approximation with a regular $n$-gon, the result is not exact, but one can show through a geometric argument comparing the radii of the circumscribed and inscribed circle of a regular $n$-gon that the output from our approach, $r^*$, satisfies $r^* \cos(\pi / n) \leq d_H(\Lambda_{sub}, \Lambda^{sup}) \leq r^*$. In our case, we chose $n = 20$ so that $0.98 r^* \leq d_H(\Lambda_{sub}, \Lambda^{sup}) \leq r^*$. 


\subsection{Testing suggested improvements}

Regarding the suggested improvement about sampling $\rho$'s denser where the Hausdorff metric varies rapidly, we found that the ideal strategy for sampling $\rho$ is highly dependent on the symbol. So in practical applications where high performance is required, the user should fine tune the sampling strategy for the specific symbol. 

To investigate the other suggested improvements (i.e., using Algorithm \ref{area_sweep_alg} instead of Algorithm \ref{first_approach_alg}), we again compute an approximating polygon and approximating superset of $\Lambda(b)$ for the symbol given in Example \ref{main_example}. To get a fair comparison, we run Algorithm \ref{area_sweep_alg} with $n= 1000$, $m = 1000$, $l = 250$ and $\#sweeps = 2$, which means that we use the same number of polygon intersections for both algorithms. 

The approximating polygon and approximating superset of $\Lambda(b)$ computed with Algorithm \ref{area_sweep_alg} can be seen in Figure \ref{second_alg_all_results}. We cannot really see any big difference compared to Figure \ref{first_alg_all_results} in the zoomed out pictures. But if we look at Figures \ref{second_alg_est_zoom} and \ref{second_alg_bound_zoom}, we see that the approximating polygon looks more like the type of set we are searching for (a connected finite union of analytic arcs), and the approximating superset in \ref{second_alg_bound_zoom} is tighter than the one in \ref{first_alg_bound_zoom}. These properties should generalize since it is never good to ``waste'' polygon intersections. It took roughly 3 minutes to run the algorithm for finding the approximating polygon and approximating superset.

We also wish to use Algorithm \ref{error_control} to see the difference between Algorithms \ref{first_approach_alg} and \ref{area_sweep_alg}. However, the running time of our implementation of Algorithm \ref{error_control} is quadratic in the number of sampled $\varphi$'s, so we cannot sample $2\cdot 10^5$ $\varphi$'s as in Figures \ref{algebraic_alg_all_results}, \ref{first_alg_all_results}, \ref{second_alg_all_results}. Instead we sample $2 \cdot 10^3$ $\varphi$'s and run Algorithms \ref{first_approach_alg} and \ref{area_sweep_alg} with the same parameters as in Figures \ref{first_alg_all_results} and \ref{second_alg_all_results}. We then compute $d_H(\Lambda_{sub}, \Lambda^{sup}) \approx 0.045$ for Algorithm \ref{first_approach_alg} and $d_H(\Lambda_{sub}, \Lambda^{sup}) \approx 0.035$ for Algorithm \ref{area_sweep_alg}. This is a notable difference, and it is also what one sees in Figures \ref{first_alg_bound_zoom} and \ref{second_alg_bound_zoom}.



\begin{figure}
	\centering
		\begin{subfigure}[t]{0.49\textwidth}
		\centering
		\includegraphics[width=\textwidth]{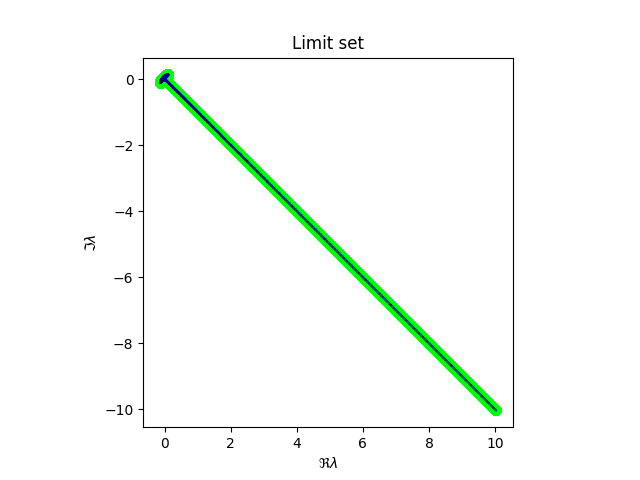}
		\caption{An approximating polygon for $\Lambda(b)$ computed using Algorithm \ref{area_sweep_alg} with $n = 1000$, $m = 1000$, $l = 250$, $\#sweeps=2$.}
		\label{second_alg_est}
	\end{subfigure}
	\hfill
	\begin{subfigure}[t]{0.49\textwidth}
		\centering
		\includegraphics[width=\textwidth]{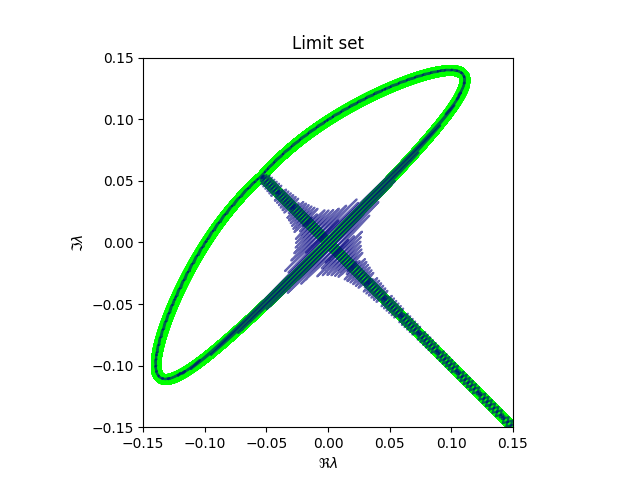}
		\caption{A zoomed in version of \ref{second_alg_est}.}
		\label{second_alg_est_zoom}
	\end{subfigure}
	
	\begin{subfigure}[t]{0.49\textwidth}
		\centering
		\includegraphics[width=\textwidth]{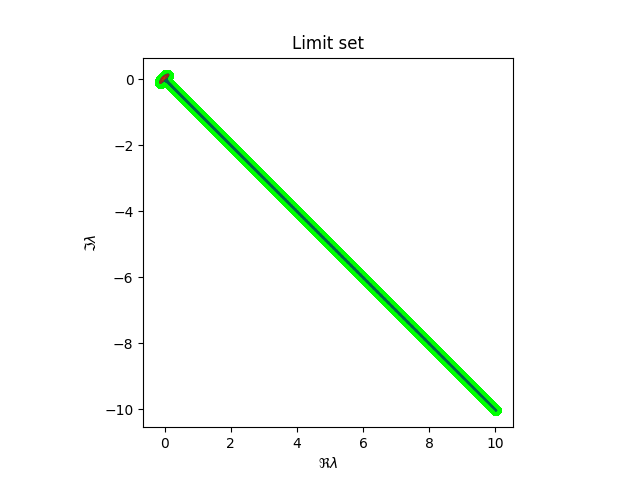}
		\caption{An approximating superset of $\Lambda(b)$ computed using Algorithm \ref{area_sweep_alg} with $n = 1000$, $m = 1000$, $l = 250$, $\#sweeps=2$.}
		\label{second_alg_bound}
	\end{subfigure}
	\hfill
	\begin{subfigure}[t]{0.49\textwidth}
		\centering
		\includegraphics[width=\textwidth]{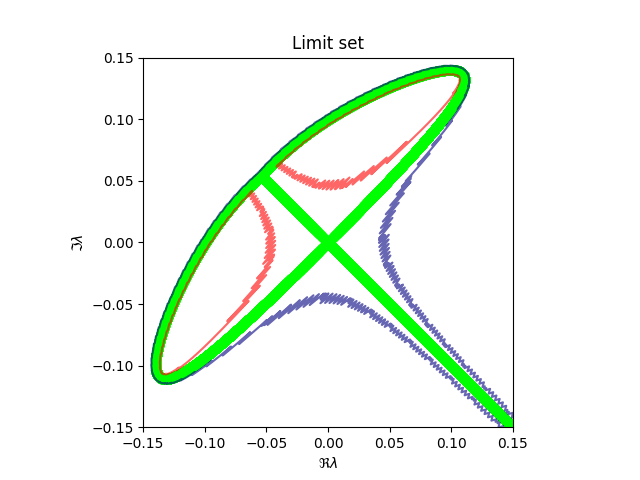}
		\caption{A zoomed in version of \ref{second_alg_bound}. Blue polygons are positively oriented and the red ones are negatively oriented. That is, the approximating superset is the region outside the red polygons and inside the blue polygon.}
		\label{second_alg_bound_zoom}
	\end{subfigure}

	\caption{The results of Algorithm \ref{area_sweep_alg}. The symbol $b$ being investigated is presented in \eqref{main_example_eq}. The lime dots in all of the subfigures are the results from the algebraic approach, these are the same results presented in Figure \ref{algebraic_alg_all_results}.}
	\label{second_alg_all_results}
\end{figure}

\section{Time complexity and convergence speed}
\label{time_complexity_conv_speed_section}
Analyzing the time complexity and convergence speed of Algorithm \ref{area_sweep_alg} rigorously seems difficult, but we will provide an argument and an example to back up that the running time for Algorithm \ref{area_sweep_alg} on average probably is $O(n^2 + nm\log m)$. Using that estimate, we then argue that the distance from the approximating polygon to $\Lambda(b)$ for most of $\Lambda(b)$ decreases as $O(1/\sqrt{k})$ where $k$ is the number of elementary operations. 

\subsection{Time complexity}

We wish to analyze the time complexity of Algorithm \ref{area_sweep_alg}, and to simplify that analysis we begin by looking at only the part of the Algorithm that produces the approximating polygon for $\Lambda(b)$. The most computationally complex parts of Algorithm \ref{area_sweep_alg} are the polygon intersections which we know are linear in the number of vertices of the incoming polygons. So what we really want to analyze is how the number of vertices in the approximating polygon $\Lambda$ grows. Let $w(i)$ be the number of vertices of $\Lambda$ after $i$ intersections. We will now argue that $w(i)$ should increase at most linearly. To see this, consider Figure \ref{nbr_vert_illustration}. When we intersect $\Lambda$ with $b_\rho^D(\text{vs})$, the typical situation is described there. Specifically, Figure \ref{intersection_illustration} is zoomed in where $b_\rho^D(\text{vs})$ intersects itself and the intersection is part of $\Lambda(b)$, which happens a bounded number of times. In Figure \ref{nbr_vert_illustration} we see that 3 vertices are added to $\Lambda$ for the specific self-intersection of $b_\rho^D(\text{vs})$. In Figure \ref{after_intersection} we see what $\Lambda$ looks like after it has been intersected. Since the number of self-intersections of $b_\rho^D(\text{vs})$ is bounded, $w(i)$ should increase at most linearly.

\begin{figure}[h!]
	\centering
	\definecolor{ttttff}{rgb}{0.2,0.2,1}
	\definecolor{ududff}{rgb}{0.30196078431372547,0.30196078431372547,1}
	\definecolor{qqccqq}{rgb}{0,0.8,0}
	\definecolor{ffqqqq}{rgb}{1,0,0}
	\begin{subfigure}[t]{0.49\textwidth}
		\centering
		\resizebox{\textwidth}{!}{
		\begin{tikzpicture}[line cap=round,line join=round,>=triangle 45,x=1cm,y=1cm, scale=1.0]
			\clip(12.43717510195849,0.5739278647084349) rectangle (16.574269646185698,3.961813591297184);
			\draw [samples=1000,rotate around={-46.7856210387764:(13.62145901449673,3.3052457502228494)},xshift=13.62145901449673cm,yshift=3.3052457502228494cm,line width=1pt,color=ffqqqq,domain=-56.53538808862847:56.53538808862847)] plot (\x,{(\x)^2/2/14.133847022157118});
			\draw [line width=1pt] (11.923939342293599,5.507708536294194)-- (14.419691767303828,5.358376344160063);
			\draw [line width=1pt] (14.419691767303828,5.358376344160063)-- (13.554695999991635,1.3986179426864822);
			\draw [line width=1pt] (13.554695999991635,1.3986179426864822)-- (17.886082212606766,1.693357241178043);
			\draw [line width=1pt] (15.669130097865885,0.033846843149471235)-- (15.584683182545474,2.926438758168702);
			\draw [line width=1pt] (11.923939342293599,5.507708536294194)-- (11.650541365309529,3.1230249391194955);
			\draw [line width=1pt] (11.650541365309529,3.1230249391194955)-- (15.584683182545474,2.926438758168702);
			\draw [line width=1pt,color=ttttff,domain=12.43717510195849:16.574269646185698] plot(\x,{(-1.3001905643435734-0.44661306873988815*\x)/-3.512948346207315});
			\draw [line width=1pt,color=ttttff,domain=12.43717510195849:16.574269646185698] plot(\x,{(--35.364148887537475-2.4763332171978174*\x)/-0.4998855171300107});
			\begin{scriptsize}
				\draw[color=ffqqqq] (15.046614593635368,43.247765714237595) node {Limit set};
				\draw [fill=qqccqq] (14.733687950657702,2.2432576221482705) circle (2pt);
				\draw [fill=ududff] (18.246636296865017,2.6898706908881587) circle (2pt);
				\draw [fill=qqccqq] (15.601407519500473,2.353573761542953) circle (2pt);
				\draw [fill=qqccqq] (14.878719546951764,2.96171524322155) circle (2pt);
				\draw [fill=qqccqq] (14.57723051980979,1.4681986940350553) circle (2pt);
				\draw [fill=qqccqq] (13.710797086666666,2.1132140283544016) circle (2pt);
				\draw [fill=ffqqqq] (15.584683182545474,2.9264387581687035) circle (2pt);
				\draw [fill=ffqqqq] (13.554695999991635,1.3986179426864829) circle (2pt);
			\end{scriptsize}
		\end{tikzpicture}
	}
	\caption{The red line is the true $\Lambda(b)$, the black lines are $\Lambda$ prior to being intersected, and the blue lines are $b_\rho^D(\text{vs})$. The figure is zoomed in where $b_\rho^D(\text{vs})$ intersects itself. The green points mark the new vertices in $\Lambda$, and the red ones are the vertices being removed from $\Lambda$.}
	\label{nbr_vert_illustration}
	\end{subfigure}
	\hfill
	\begin{subfigure}[t]{0.49\textwidth}
		\centering
		\resizebox{\textwidth}{!}{
	\begin{tikzpicture}[line cap=round,line join=round,>=triangle 45,x=1cm,y=1cm, scale=1.0]
		\clip(12.43717510195849,0.5739278647084349) rectangle (16.574269646185698,3.961813591297184);
		\draw [samples=1000,rotate around={-46.7856210387764:(13.62145901449673,3.3052457502228494)},xshift=13.62145901449673cm,yshift=3.3052457502228494cm,line width=1pt,color=ffqqqq,domain=-56.53538808862847:56.53538808862847)] plot (\x,{(\x)^2/2/14.133847022157118});
		\draw [line width=1pt] (11.923939342293599,5.507708536294194)-- (14.419691767303828,5.358376344160063);
		\draw [line width=1pt] (14.419691767303828,5.358376344160063)-- (13.7107970866667, 2.11321402835440);
		\draw [line width=1pt] (14.5772305198098, 1.46819869403506) -- (17.886082212606766,1.693357241178043);
		\draw [line width=1pt] (15.669130097865885,0.033846843149471235)-- (15.6014075195005, 2.35357376154295);
		\draw [line width=1pt] (11.650541365309529,3.1230249391194955)-- (14.8787195469518, 2.96171524322155);
		\draw [line width=1pt] 
		(13.7107970866667, 2.11321402835440) -- (15.6014075195005, 2.35357376154295);
		\draw [line width=1pt] 
		(14.8787195469518, 2.96171524322155) -- (14.5772305198098, 1.46819869403506);
		\begin{scriptsize}
			\draw [fill=qqccqq] (14.733687950657702,2.2432576221482705) circle (2pt);
			\draw [fill=qqccqq] (15.601407519500473,2.353573761542953) circle (2pt);
			\draw [fill=qqccqq] (14.878719546951764,2.96171524322155) circle (2pt);
			\draw [fill=qqccqq] (14.57723051980979,1.4681986940350553) circle (2pt);
			\draw [fill=qqccqq] (13.710797086666666,2.1132140283544016) circle (2pt);
		\end{scriptsize}
	
	\end{tikzpicture}
	}
	\caption{An illustration of $\Lambda$ from Algorithm \ref{area_sweep_alg} after it has been intersected as depicted in Figure \ref{nbr_vert_illustration}. The red line is $\Lambda(b)$ and the black lines are $\Lambda$. The green dots are the new vertices of $\Lambda$.}
	\label{after_intersection}
	\end{subfigure}
	
	\caption{A probable illustration of how it looks when $\Lambda$ is intersected with $b_\rho^D(\text{vs})$ in Algorithm \ref{area_sweep_alg}. }
	\label{intersection_illustration}
\end{figure}

To test our hypothesis about $w(i)$ growing linearly, we generate a random Laurent polynomial with $r=s=5$ and coefficients chosen uniformly in $[-10, 10] \times [-10i, 10i]$ and rounded to three decimals. We get the Laurent polynomial \begin{equation}
	\begin{multlined}
	b(t) = (-0.304+5.958i)t^{5} + (-6.954+8.098i)t^{4} + (-0.016-6.911i)t^{3} \\ + (-7.017-8.355i)t^{2} + (1.766+9.48i)t^{1} + (-9.161-6.354i) \\ + (-3.186+6.321i)t^{-1} + (-5.482+0.833i)t^{-2} + (-8.159-5.271i)t^{-3} \\ + (4.942+4.362i)t^{-4} + (-7.786-5.305i)t^{-5}.
	\end{multlined}
\label{random_pol}
\end{equation} We then run Algorithm \ref{area_sweep_alg} with $m = 500$, $n = 3000$,  $l = 500$, $\#sweeps=6$, $\rho$ sampled uniformly, and keep track of $w(i)$, and plot it in Figure \ref{number_vertices_growth}. The resulting approximating polygon can be seen in Figure \ref{number_vertices_growth_limit_set}. Figure \ref{number_vertices_growth} looks like it should according to our argument: $w(i)$ appears to grow piecewise linearly, with the slope changing. These changes correspond to the alterations of the amount of self-intersections. The algorithm finds the same interval of good $\rho$'s after the first 2 intersection-runs, which explains the repetition of the pattern. Also note that the repetitive pattern we see is exactly what should happen if our reasoning in Figure \ref{nbr_vert_illustration} is correct, since we intersect over the same interval of $\rho$'s each time. 
\begin{figure}[H]
	\centering
\begin{subfigure}[t]{0.49\textwidth}
	\centering
	\includegraphics[width=0.95\textwidth]{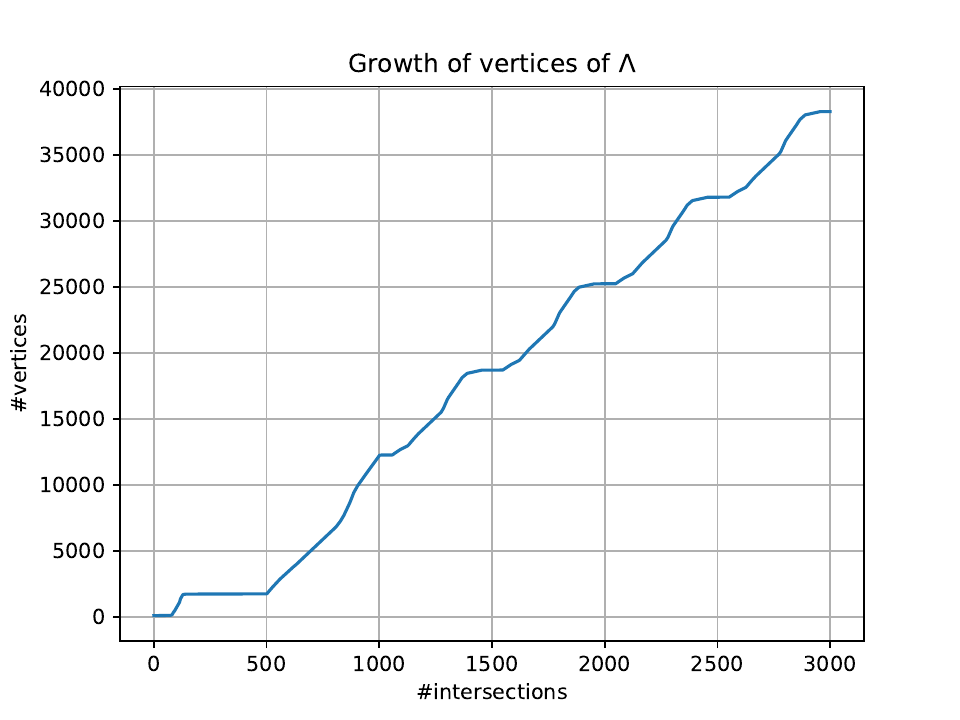}
	\caption{The number of vertices of $\Lambda$ in Algorithm \ref{area_sweep_alg} as a function of the number of intersections.}
	\label{number_vertices_growth}
\end{subfigure}
\hfill
\begin{subfigure}[t]{0.49\textwidth}
	\centering
	\includegraphics[width=0.95\textwidth]{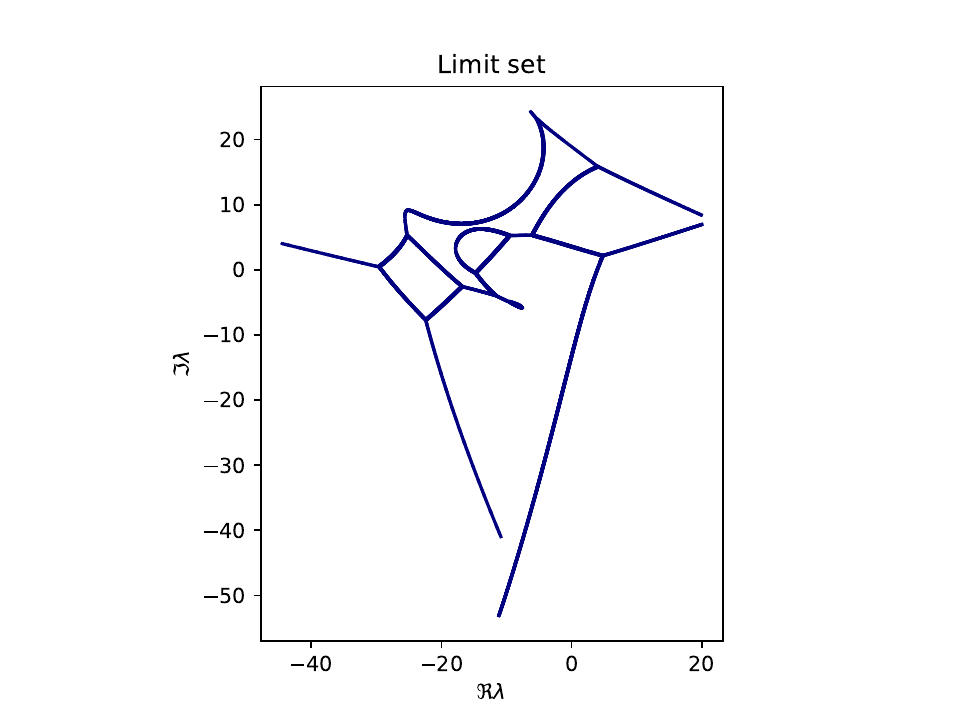}
	\caption{The approximating polygon for $\Lambda(b)$ given by Algorithm \ref{area_sweep_alg}.}
	\label{number_vertices_growth_limit_set}
\end{subfigure}
	\caption{Results of Algorithm \ref{area_sweep_alg} on  $b$ as in \eqref{random_pol}, $m = 500$, $n = 3000$,  $l = 500$, $\#sweeps=6$.}
\end{figure}

Assuming $w(i) = O(i + m)$, the total complexity is \begin{equation*}
	\begin{split}
		\MoveEqLeft \sum_{i=1}^{n} O\bigl(w(i) + m\bigr) + \sum_{s=1}^{\#sweeps} \sum_{i=1}^{l} O\bigr(w(s\cdot n /\#sweeps) + m\bigr) \\ & = O\left(\frac{n(1 + 2m + n + 2m)}{2}\right) + O\left(\frac{l \#sweeps(n/\#sweeps + 2m +n + 2m)}{2}\right) \\ & =  O(n^2 + nm + n l \#sweeps + m l \#sweeps ).
	\end{split}
\end{equation*} However, as we remarked earlier, we can practically optimize away the $\#sweeps$-factor if it is of moderate size, and in regular use cases it should never be set higher than about 10. Also, typically $l$ is chosen as smaller than $n$, else we would ``waste'' too many intersections on area sweeping. So with these reasonable restrictions on $\#sweeps$ and $l$, the time complexity for computing the approximating polygon in Algorithm $\ref{area_sweep_alg}$ should on average be $O(n^2 + nm)$. 

We now analyze the time complexity of Algorithm \ref{area_sweep_alg} when we also produce the approximating superset. Remember the earlier assumptions, that the number of vertices for ${P}_\rho$ is $O(m)$ and producing ${P}_\rho$ takes $O(m \log m)$ time. Using a similar argument as above, we could argue that the number of vertices for $\Lambda_b$ in Algorithm \ref{area_sweep_alg} grows linearly. So the time complexity for computing the approximating superset is almost the same as computing the approximating polygon, the only operation that adds additional time complexity is computing ${P}_\rho$. We compute $n$ ${P}_\rho$'s in Algorithm \ref{area_sweep_alg} so this step adds an additional $O(nm\log m)$ term to the overall time complexity. Hence we estimate the total time complexity for computing the approximating superset using Algorithm \ref{area_sweep_alg} to be $O(n^2 + mn) + O(nm\log m) = O(n^2 + mn \log m)$. Since computing the approximating polygon took $O(n^2 + mn)$ time, the total time complexity for running Algorithm 2 is $O(n^2 + mn \log m)$. 

Next we discuss the differences between the algebraic and geometric approach when the degree of the symbol varies. Simply put, the basic operation in the algebraic approach is to solve $b(z) = \lambda_k$, compared to the geometric approach where the basic operation is to evaluate $b(z)$. Since evaluation is inherently much simpler than root finding, we expect the geometric approach to perform better for high degree symbols. 

Above we have implicitly assumed that the cost of evaluating our symbol is constant, which is reasonable if we consider one symbol. However, one could quite easily account for the time complexity of evaluating the symbols which is directly linked to the degree $r+s$ of $b$. Let $p := r+s$ denote the degree of the symbol $b$ in Algorithm \ref{area_sweep_alg}, and analyze the evaluations of $b$. We notice that the evaluations occur when we evaluate $b_\rho^D(\text{vs})$ and $\norm{{x}_\rho''(v)}_\infty + \norm{{y}_\rho''(v)}_\infty$. One evaluation of $b$ takes $O(p)$ time if implemented correctly. The bulk of the evaluations will occur inside the \textbf{for}-loop on line 15, which will run $n$ times. And $\text{vs}$ has length $m$, so the total added complexity when accounting for $p$ is $O(pnm)$ and in total we get $O(n^2 + mn \log m + pnm)$. We can compare this with the algebraic approach, where we denote the number of sampled $\varphi$'s by $N$. When solving \eqref{graph_intersect_property} we will get $p$ candidates $\lambda_k$, and for each of those candidates we need to solve $b(z) = \lambda_k$. So the time complexity of the algebraic algorithm is $O(Nps(p))$, where $s(p)$ denotes the running time of the polynomial rootfinder used. Depending on the algorithm used, $s(p)$ will differ. However, we need to find $p$ roots, so $O(s(p)) \geq O(p)$. Hence the algebraic approach scales considerably faster in $p$ than the geometric approach. 

To compare the algebraic and geometric approach in practice, and show where the geometric algorithm has a slight advantage, we constructed the following example.

\begin{example}
	Consider \begin{equation}
		b(t) = t^{-400} + t^{-27} + t^{-26} + t^{-4} + t^{-3} + t.
		\label{high_order_symbol}
	\end{equation}
	We wish to compare the geometric and algebraic algorithm. And to make the comparison fair, we use a state of art polynomial solver, \code{MPSolve}, created by Bini, Riorentino and Robol, presented in \cite{mpsolve1} and \cite{mpsolve2}. We use the python bindings from the package \code{python-mpsolve} and use \code{solve} with the setting \code{STANDARD\_MPSOLVE}. We sample $N=10$ $\varphi$'s for the algebraic approach. We also run Algorithm \ref{area_sweep_alg} with $n=1000$, $m = 1000$, $l = 250$, and $\#sweeps=2$. The results can be seen below in Figure \ref{high_order_comparison}. Both algorithms took roughly 12 minutes to run. Comparing Figures \ref{high_order_subset} and \ref{high_order_superset}, one could argue that we get more information about $\Lambda(b)$ from the geometric algorithm, since \ref{high_order_superset} looks much more ``resolved'' than \ref{high_order_subset}. Also, one would expect an even larger discrepancy when considering even higher degree symbols. 
	
	\begin{figure}[H]
		\centering
		\begin{subfigure}[t]{0.49\textwidth}
			\centering
			\includegraphics[width=\textwidth]{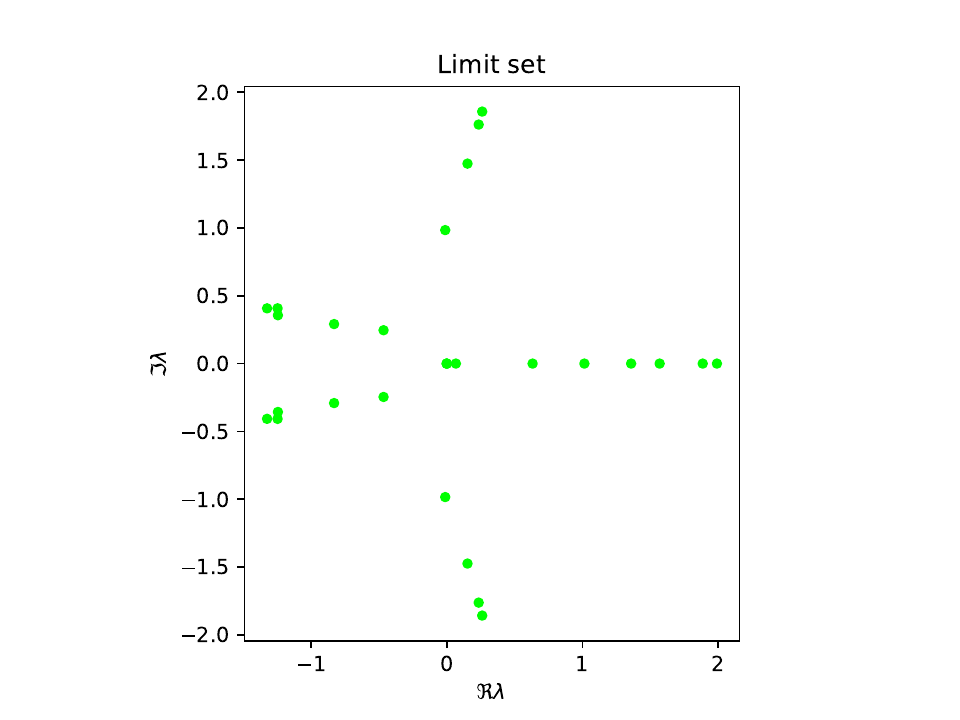}
			\caption{$\Lambda(b)$  estimated using the algebraic approach (from Section \ref{previous_work_subsection}) with $10$ $\varphi$'s sampled uniformly in $[0, \pi]$.}
			\label{high_order_subset}
		\end{subfigure}
		\hfill
		\begin{subfigure}[t]{0.49\textwidth}
			\centering
			\includegraphics[width=\textwidth]{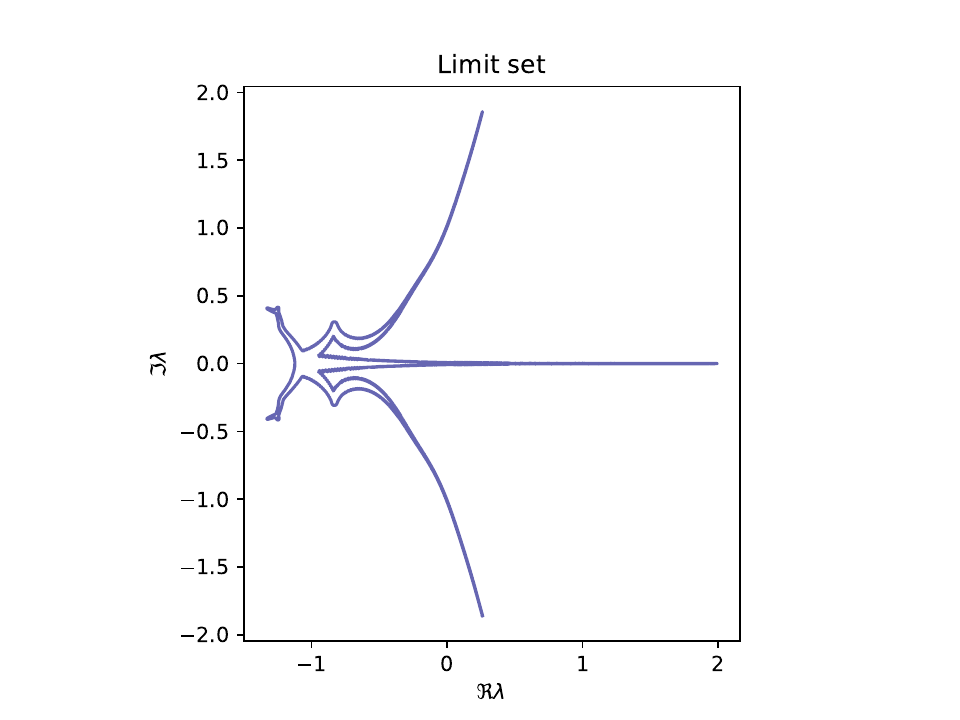}
			\caption{The approximating superset produced by Algorithm \ref{area_sweep_alg} when run on $b$ as in \eqref{high_order_symbol}, $n=1000$, $m = 1000$, $l = 250$, and $\#sweeps=2$.}
			\label{high_order_superset}
		\end{subfigure}
		
		\caption{A comparison between the algebraic and geometric approach. Both algorithms have been run on the symbol in \eqref{high_order_symbol} and took roughly 12 minutes to run.}
		\label{high_order_comparison}
	\end{figure}
	\label{high_order_example}
\end{example}

\subsection{Convergence speed}
To argue about the convergence speed we first remind ourselves about Remark \ref{different_errors_remark} which bounds the part of the error originating in the polygon approximation. Because of \eqref{error_separation}, we have a good idea of how big we must choose $m$ to get a certain error level. Therefore we will consider $m$ to be constant and reason about the convergence speed as $n$ increases. This approach also makes sense in terms of the algorithm, $m$ is set in the beginning and after that we can do as many intersections as we want. 

From \eqref{error_separation} we see that what we want to investigate is $$d(n) := d_H \Biggl(\,\bigcap_{j=0}^n \spec T(b_{\rho_j}), \Lambda(b)\Biggr).$$ So $d(n)$ describes how the error changes when $n$ increases. Consider Figure \ref{nbr_vert_illustration} but think of the black lines describing $\cap_{j=0}^n \spec T(b_{\rho_j})$, which makes sense since if $b'_\rho(t) \neq 0$, then locally $b_\rho(t)$ is almost linear. Note that the distances between the red curve and the red dots in Figure \ref{nbr_vert_illustration} is a lower bound of $d(n)$. If we increase $n$ by one, and intersect with the spectrum corresponding to the blue lines, we argue that the now relevant distances, the ones between the green dots and the red curve, should decrease geometrically. Assuming this is true for all of $\Lambda(b)$, we see that as $n$ is doubled, the maximum distance $d(n)$ should have decreased geometrically, since doubling $n$ corresponds to intersecting with a ``blue spectrum'' between each pair of black self-intersections, as in Figure \ref{nbr_vert_illustration}, and each black self-intersection corresponds to at most one $\rho$. So we have argued that \begin{equation}
	d(2n) \leq d(n)/r,
	\label{geometric_bound}
\end{equation} for some factor $r > 1$. If we assume the new intersections to be placed in the ``middle of the old ones'', we should have $r \approx 2$. 

However, ultimately we are interested in how $d(n)$ decreases as a function of the number of elementary operations. Let $k$ denote the number of elementary operations. As we argued in the previous subsection, $k = O(n^2 + mn \log m)$, and we assumed $m$ to be constant, so $k < (c^*)^2 n^2$, which means $n > \sqrt{k}/c^*$. Let $d^*(k)$ denote $d_H (\cap_{j=0}^n \spec T(b_{\rho_j}), \Lambda(b))$ as a function of the number of elementary operations. Further, we see that $d(n)$ and $d^*(k)$ are non-increasing, and get \begin{equation}
	d^*(k) \leq d\left(\floor*{\sqrt{k}/c^*}\right) \leq d\left(n_0 2^{\floor*{\log_2\left(\floor*{\frac{\sqrt{k}}{c^*}}/n_0\right)}} \right),
	\label{num_operations_ineq}
\end{equation} for some $n_0$, which is the $n$ at which we assume we begin our doubling argument. Using the bounds in \eqref{num_operations_ineq} and \eqref{geometric_bound} recursively, we obtain \begin{equation*}
\begin{multlined}
d^*(k) \leq \frac{d(n_0)}{r^{\floor*{\log_2\left(\floor*{\frac{\sqrt{k}}{c^*}}/n_0\right)}}} \leq \frac{d(n_0)}{2^{(\log_2 r) \left(\log_2\left(\floor*{\frac{\sqrt{k}}{c^*}}/n_0\right) - 1\right)}} \\ = \frac{d(n_0)}{\left(\floor*{\frac{\sqrt{k}}{c^*}}/2n_0\right)^{\log_2 r}} \leq \frac{c}{k^{\frac{1}{2}\log_2 r}}.
\end{multlined}
\end{equation*} for some constant $c$. The conclusion of the above argument is that for a fixed $m$ we should have $d^*(k) \leq O(1/k^{\frac{1}{2}\log_2 r})$. And if we -- as commented on earlier -- choose the $\rho$'s in a good way, $r$ should be $\approx 2$, which means $d^*(k) \leq O(1/\sqrt{k})$. 

So we have argued for $d_H (\cap_{j=0}^n \spec T(b_{\rho_j}), \Lambda(b) )$ decreasing as $O(1/\sqrt{k})$, and assuming that $\Delta_v$ is small enough, this argument also carries over to the distances between the approximating polygon and $\Lambda(b)$, and between the approximating superset and $\Lambda(b)$. This can be seen from \eqref{error_separation} and the proof of Corollary \ref{best_bound_result}.

However, this entire argument is based on the assumption that Figure \ref{intersection_illustration} is a good approximation of the local behavior of intersected spectra. An implicit assumption in Figure \ref{intersection_illustration} is for example that $b_\rho(\mathbb{T})$ is not tangent to itself (which is true for all but a finite number of points on $\Lambda(b)$). If one examines the symbol in Example \ref{main_example} more closely, one sees that this is exactly what happens in the center of ``the cross'', since \begin{equation*}
b_1(e^{i0}) = b_1(e^{i \frac{\pi}{2}}) = 0, \; \; \frac{\di }{\di v} b_1(e^{iv}) \Bigr|_{v = 0} = \frac{\di }{\di v} b_1(e^{iv}) \Bigr|_{v = \frac{\pi}{2}} = -(1+i).  
\end{equation*}

\begin{example}
Consider the same symbol as we did in the introduction $b(t) = t^{-4} + t$. We then have \begin{equation*}
	\Lambda(b) = \bigl\{\lambda \in \mathbb{C} : \lambda = re^{\frac{2\pi i}{5}}, \, 0 \leq r \leq 5 \cdot 4^{-4/5}\bigr\}.
\end{equation*} We want to test the arguments in this subsection empirically. Therefore we compute $d_H(\Lambda(b), \Lambda^{sup})$ with the approximating superset $\Lambda^{sup}$ computed using Algorithm \ref{first_approach_alg} for different $n$. Since we know $\Lambda(b)$ exactly we can compute $d_H(\Lambda(b), \Lambda^{sup})$ in a similar manner as we implemented Algorithm \ref{error_control}, since we can compute $\left(\Lambda(b)\right)_r$ for different $r$ and check whether $\left(\Lambda(b)\right)_r \supset \Lambda^{sup}$ or not. We compute $\Lambda^{sup}$ for $m = 2000$ and $n = 100, 200, \ldots, 1000$.

The results can be seen in Figure \ref{convergence_rate_fig}. If we first consider Figure \ref{convergence_rate_n}, this is roughly what we would expect from the above argument where one conclusion is that we should have $d(2n) \approx d(n)/2$. If we assume that $d_H(\Lambda(b), \Lambda^{sup}) \approx Cn^\alpha$, then we get $\alpha \approx -1$, which $-0.92$ is fairly close to. In Figure \ref{convergence_rate_time}, we should – according to our argument – after a while see $d_H(\Lambda(b), \Lambda^{sup})$ decaying as $t^{-0.5}$. This is also what we start seeing: in the end the decay is about $C t^{-0.63}$. The convex type curve we see in Figure \ref{convergence_rate_time} is also what we would expect for an algorithm of running time $O(n^2 + nm \log m)$. For small $n$, the second term is much larger since $m$ is much larger than $n$, so the time is almost linear in $n$.  
\begin{figure}[H]
	\centering	
	\begin{subfigure}[t]{0.49\textwidth}
		\centering
		\includegraphics[width=\textwidth]{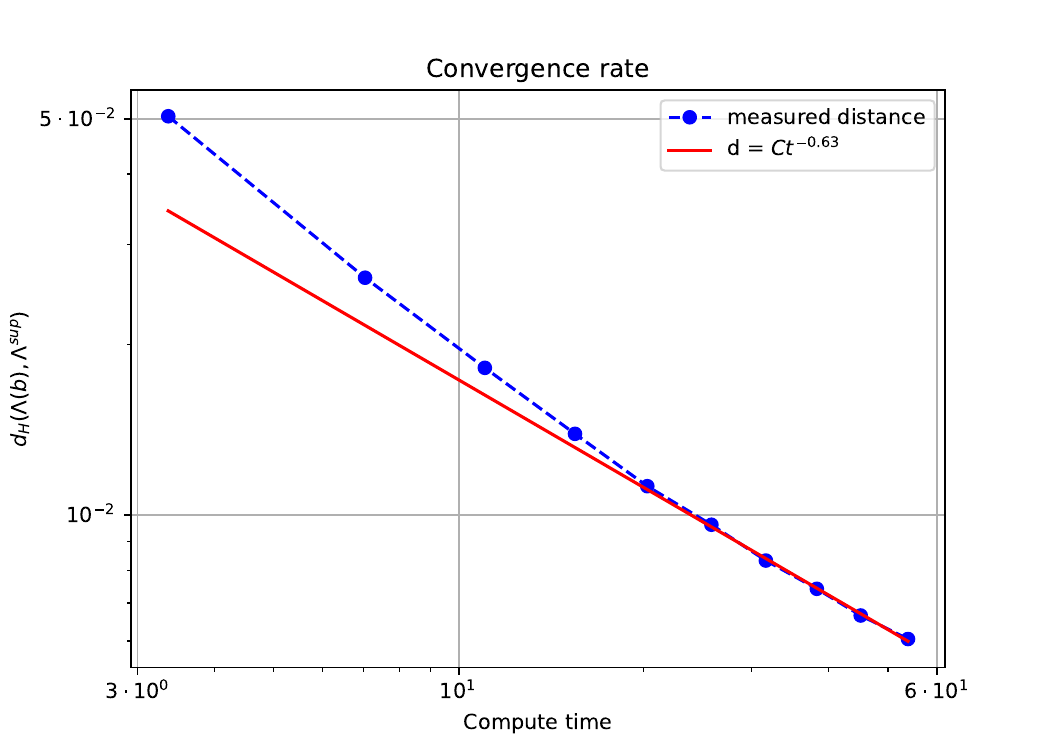}
		\caption{The Hausdorff distances $d_H(\Lambda(b), \Lambda^{sup})$ plotted as a function of the running time of the algorithm. The red line is fitted to the last five points.}
		\label{convergence_rate_time}
	\end{subfigure}
	\hfill
	\begin{subfigure}[t]{0.49\textwidth}
		\centering
		\includegraphics[width=\textwidth]{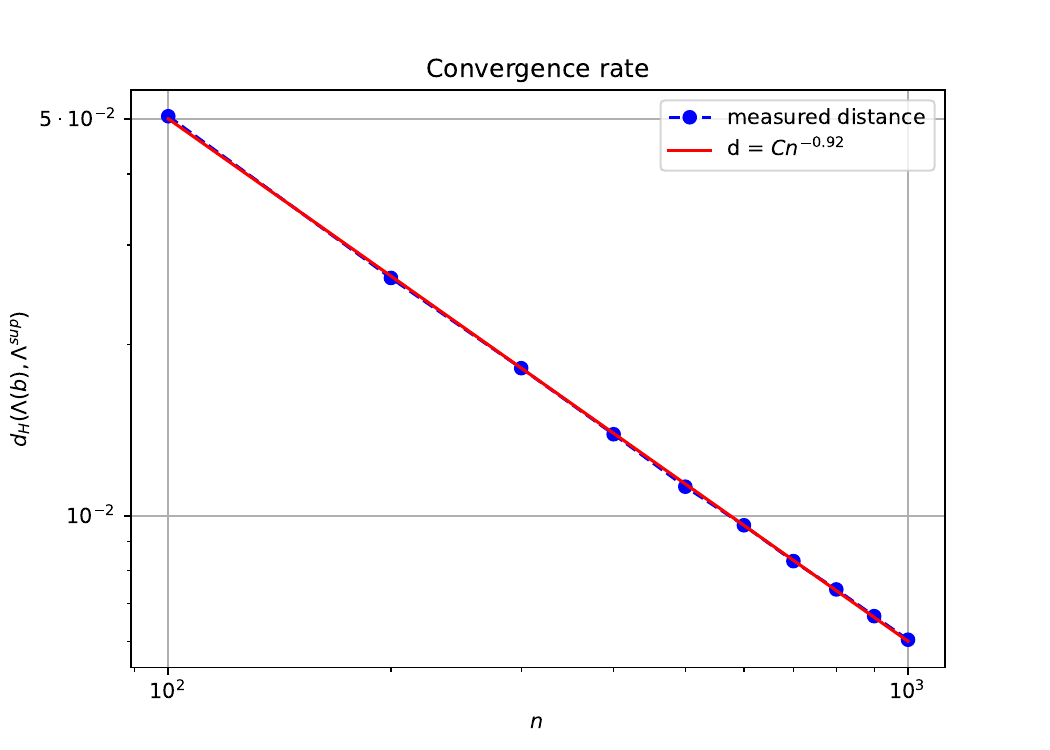}
		\caption{The Hausdorff distances $d_H(\Lambda(b), \Lambda^{sup})$ plotted as a function of $n$. The red line is fitted to all of the points.}
		\label{convergence_rate_n}
	\end{subfigure}

	\caption{Empirical distances $d_H(\Lambda(b), \Lambda^{sup})$ with the approximating superset $\Lambda^{sup}$ computed using Algorithm \ref{first_approach_alg} for $m = 2000$ and $n = 100, 200, \ldots, 1000$. Both subfigures have logarithmic axes.}
	\label{convergence_rate_fig}
\end{figure}

\end{example}

\section{Conclusion}
\label{conclusion_section}
A new approach to approximating $\Lambda(b)$ geometrically has been presented. Pseudocode for the approach can be found in Algorithm \ref{area_sweep_alg}, and the python code we used can be found in \cite{glisec}. Notably, we are able to produce an approximating superset for $\Lambda(b)$, and combining the superset with the subset from the previous approach we are able to compute an explicit error bound using Algorithm \ref{error_control}. 

Compared to the previous approaches that are algebraically based, the new approach does not require root finding of arbitrary complex polynomials, which as we saw in Example \ref{high_order_example} can be beneficial when dealing with symbols of high degree. So in a way, our new algorithm makes it feasible to compute with symbols of very large degree (1000 and upward), which was not feasible before.  

\section*{Acknowledgments}
The authors wish to thank Mikael Persson Sundqvist for fruitful and insightful discussions and Frank Wikström for suggesting the approach of constructing an approximating superset. The authors also want to thank the referees for many useful comments, in particular for suggesting the method used in Algorithm \ref{error_control} to explicitly bound the error.


\newpage

\bibliographystyle{plain}

\end{document}